\newtheorem{theorem}{Theorem}[section]
\newtheorem{lemma}[theorem]{Lemma}
\newtheorem{proposition}[theorem]{Proposition}
\newtheorem{corollary}[theorem]{Corollary}
\theoremstyle{definition}
\newtheorem{definition}[theorem]{Definition}
\theoremstyle{remark}
\newtheorem{remark}[theorem]{Remark}
\numberwithin{equation}{section}
\newcommand{\abs}[1]{\lvert#1\rvert}
\newcommand{\C}{{\mathbb{C}}}
\newcommand{\R}{{\mathbb{R}}}
\newcommand{\Z}{{\mathbb{Z}}}
\renewcommand{\epsilon}{\varepsilon}
\renewcommand{\phi}{\varphi}
\renewcommand{\theta}{\vartheta}
\newcommand{\lie}[1]{{\mathcal{L}_{#1}}}
\newcommand{\w}{\wedge}
\newcommand{\lcan}{{\lambda_{\mathrm{can}}}}
\DeclareMathOperator{\id}{Id}
\DeclareMathOperator{\ind}{index}
\DeclareMathOperator{\open}{Open}
\begin{document}
\title{Lecture notes on stabilization of contact open books}

\author{Otto van Koert}

\address[O.~van Koert]{Department of Mathematical Sciences\\
  Seoul National University\\
  San56-1 Shillim-dong Kwanak-gu\\
  Seoul 151-747\\
  Korea
}
\email[O.~van Koert]{okoert@snu.ac.kr}

%    General info

\subjclass{Primary 53D35, 57R17}

\keywords{Open books, stabilization, contact structures, surgery}

\begin{abstract}
This note explains how to relate some contact geometric operations, such as surgery, to operations on an underlying contact open book.
In particular, we shall give a simple proof of the fact that stabilizations of contact open books yield contactomorphic manifolds.

Let us remark that the results in this note are all well known to experts.
This note just aims to provide some references for these results.
\end{abstract}

\maketitle

\section{Introduction}
The correspondence between open books and contact structures as established by Giroux \cite{Giroux:ICM} has been extremely fruitful in understanding contact structures both in dimension $3$ and in higher dimensions.

In general, this correspondence looks as follows.
Given a Weinstein manifold $W$ and a symplectomorphism $\psi$ of $W$ that is the identity near $\partial W$, we can endow the mapping torus of $(W,\psi)$ with a natural contact form.
The boundary of this mapping torus is diffeomorphic to $\partial W \times S^1$, which allows us to glue in a copy of $\partial W\times D^2$.
The latter set can be given a contact form which glues nicely to the one on the mapping torus.

Conversely, every compact coorientable contact manifold can in fact be obtained by this construction.
However, supporting open books for contact manifolds are not unique.
For instance, one has a stabilization procedure, which does not change the contact structure, but it does change the open book.
Suppose we are given a contact open book $\open(W,\psi)$ with a Lagrangian disk $L$ in a page $W$ such that $\partial L$ is a Legendrian sphere in $\partial W$.
We obtain a new page $\tilde W$ by attaching a symplectic handle to $W$ along $\partial L$.
The monodromy $\psi$ can be extended as the identity on the symplectic handle.
Since $\tilde W$ contains a Lagrangian sphere formed by $L$ and the core of the symplectic handle, we can compose the monodromy $\psi$ with a right-handed Dehn twist $\tau_L$ along this Lagrangian sphere.
This leads us to the (positive) stabilization of $\open(W,\psi)$, which is given by $\open(\tilde W,\tau_L \circ \psi)$.
According to Giroux the stabilization is contactomorphic to the original contact manifold.

In dimension $3$ the above correspondence is even better.
Giroux has shown that on a compact, orientable $3$-manifold $M$, open books for $M$ up to (positive) stabilization correspond bijectively to isotopy classes of contact structures on $M$.

The goal of this note is the clarify some of these well-known notions and to provide proofs for some of them.
We shall discuss the relation between contact surgery and open books:
subcritical handle attachment along isotropic spheres in the binding can be seen as handle attachment to the page of the open book, whereas Legendrian surgery along a Legendrian sphere $L$ in a page can be seen as composing the initial monodromy with a right-handed Dehn twist along $L$.
This implies the well-known assertion that contact open books whose monodromy is isotopic to the product of right-handed Dehn twists are Stein fillable.
We also provide a proof of the fact that stabilization does not change the contact structure.

Our proof is rather elementary and works almost entirely in the contact world. 
In particular, we shall not use Lefschetz fibrations (which could be used to look at the situation from another point of view): we basically interpret the handle attachment to the page and change of monodromy as successive contact surgeries which cancel each other.
To see the latter though, we use symplectic handle cancellation.\\
\\
\noindent{\bf Acknowledgements. }
I thank F.~Ding, H.~Geiges, and K.~Niederkr\"uger for helpful comments and suggestions.

\section{Weinstein manifolds and open books}

\subsection{Weinstein and Stein}

Let us first define the notion of Weinstein manifold.

\begin{definition}
  Let $M$ be a smooth manifold, and let $f:\, M\to \R$ be a smooth
  function.  A vector field $X$ on $M$ is called
  \textbf{gradient-like} for the function $f$ if $\lie{X} f > 0$
  outside the critical points of $f$.
\end{definition}

\begin{definition}
  Let $(W,\omega)$ be a symplectic manifold. A proper function $f:\, W
  \to [0,\infty[$ is called \textbf{$\omega$--convex} if it admits a
  complete gradient-like Liouville vector field $X$, i.e.~$\lie{X}
  \omega = \omega$.  We say $(W,\omega)$ is a \textbf{Weinstein
    manifold} if there exists an $\omega$--convex Morse function.
\end{definition}

\begin{remark}
From this definition it follows that all ends of a Weinstein manifold $W$ are convex, i.e.~they look like symplectizations.
Indeed, let $f$ be an $\omega$--convex function and $X$ be a complete gradient-like Liouville vector field for $f$.
Since the vector field $X$ is assumed to be complete, $W$ cannot have have any boundary components, because critical points of $X$ must be isolated by the Morse condition.

Furthermore, since the Liouville vector field $X$ is gradient-like for $f$, we see that $X$ is positively transverse to regular level sets of $f$. Combined with properness of $f$ this implies convexity of the ends.
\end{remark}

\begin{remark}
We shall also apply the definition of $\omega$--convex function to general symplectic cobordisms.
In such a case the function $f$ may not be bounded from below.
The most basic example is a symplectization $(\R\times M,\omega=d(e^t\alpha) )$, where the function $f(t,x)=e^t$ is $\omega$--convex for $X=\frac{\partial}{\partial t}$.
\end{remark}

Note that $i_X \omega$ defines a primitive of $\omega$, so Weinstein
manifolds are exact symplectic.  For the sake of completeness, let us
briefly recall some related notions.

\begin{definition}
  Let $(W,J)$ be an almost complex manifold.  A function $f:\, W \to
  \R$ is said to be \textbf{strictly plurisubharmonic} if
  \begin{equation*}
    g(X,Y) := -d(df \circ J)(X, JY)
  \end{equation*}
  for all vectors $X,Y$ defines a Riemannian metric.
\end{definition}

Let $(W,J)$ be a Stein manifold.
By a theorem of Grauert, $(W,J)$ admits a strictly plurisubharmonic function $f$.
Denote the associated symplectic form $-d(df\circ J)$ on $W$ by $\omega_f$.  
We then see that strictly plurisubharmonic functions on Stein manifolds are examples of $\omega_f$--convex Morse functions, i.e.~Stein manifolds are Weinstein.
Indeed, by solving the equation
$$
i_X\omega_f=-df \circ J,
$$
we obtain a Liouville vector field that is gradient-like for $f$, as $0 \geq \omega_f(X,JX)=df(X)$.

According to Eliashberg the converse is also true, but since we are only interested in the exact symplectic structure rather
than the complex structure, we shall formulate everything using Weinstein manifolds.

\begin{remark}
  A \textbf{compact Weinstein manifold} $(\Sigma,\omega)$ is a compact
  symplectic manifold with boundary $K$ that can be a embedded into a
  Weinstein manifold $(W,\omega)$ with an $\omega$--convex function
  $f$ such that $\Sigma$ is given as the preimage $f^{-1}([0,C])$, and such
  that $C$ is a regular level set of $f$.  Note that such a regular
  level set is automatically contact.
\end{remark}

\subsection{Contact open books}

\begin{definition}
  An \textbf{abstract (contact) open book} $(\Sigma, \lambda, \psi)$
  consists of a compact Weinstein manifold $(\Sigma, \lambda)$, and a
  symplectomorphism $\psi:\, \Sigma \to \Sigma$ with compact support
  such that $\psi^*d\lambda = d\lambda$.
\end{definition}

Let us now show that an abstract contact open book corresponds to
a contact manifold with a supporting open book.

By a lemma of Giroux \cite{Giroux:talk} we can assume that $\psi^*
\lambda =\lambda-dh$.  We choose the function $h$ to be positive.  For
completeness, here is the lemma and a proof.

\begin{lemma}[Giroux]
  The symplectomorphism $\psi$ can be isotoped to a symplectomorphism
  $\widehat\psi$ that is the identity near the boundary and that
  satisfies
  \begin{equation*}
    \widehat\psi^*\lambda = \lambda-dh \;.
  \end{equation*}
\end{lemma}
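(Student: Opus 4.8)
The idea is a Moser-type argument. First I would note that $\sigma := \psi^{*}\lambda-\lambda$ is a closed $1$-form on $\Sigma$, since $d\sigma = \psi^{*}d\lambda-d\lambda = 0$, and that $\sigma$ vanishes on a neighborhood of $\partial\Sigma$ because $\psi$ has compact support. The plan is to compose $\psi$ on the right with a symplectic isotopy $\phi_{t}$, starting at $\phi_{0}=\mathrm{id}$, chosen so that the $1$-form $(\psi\circ\phi_{1})^{*}\lambda-\lambda$ becomes exact, and then to set $\widehat\psi := \psi\circ\phi_{1}$.

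To build $\phi_{t}$, I would define a vector field $X$ on $\Sigma$ by $\iota_{X}d\lambda = \lambda-\psi^{*}\lambda = -\sigma$; this determines $X$ uniquely because $d\lambda$ is symplectic, and $X$ vanishes wherever $\sigma$ does, in particular near $\partial\Sigma$. Hence $X$ generates a complete flow $\phi_{t}$ fixing a neighborhood of $\partial\Sigma$, and Cartan's formula gives $\lie{X}d\lambda = d\,\iota_{X}d\lambda = -d\sigma = 0$, so each $\phi_{t}$ is a symplectomorphism with $\phi_{t}^{*}d\lambda = d\lambda$. Consequently $t\mapsto\psi\circ\phi_{t}$ is an isotopy through compactly supported symplectomorphisms from $\psi$ to $\widehat\psi$, and $\widehat\psi$ is the identity near $\partial\Sigma$.

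It then remains to check exactness of $\widehat\psi^{*}\lambda-\lambda$. Writing $\widehat\psi^{*}\lambda-\lambda = (\phi_{1}^{*}\lambda-\lambda)+\phi_{1}^{*}\sigma$ and using $\frac{d}{dt}\phi_{t}^{*}\lambda = \phi_{t}^{*}\lie{X}\lambda = d(\phi_{t}^{*}\iota_{X}\lambda)-\phi_{t}^{*}\sigma$, integration in $t$ shows that $\phi_{1}^{*}\lambda-\lambda$ is cohomologous in $H^{1}(\Sigma;\R)$ to $-\int_{0}^{1}[\phi_{t}^{*}\sigma]\,dt$. Since each $\phi_{t}$ is isotopic to the identity, it acts trivially on de~Rham cohomology, so $[\phi_{t}^{*}\sigma]=[\sigma]$ for all $t$; hence $[\phi_{1}^{*}\lambda-\lambda]=-[\sigma]$ and $[\phi_{1}^{*}\sigma]=[\sigma]$, and therefore $[\widehat\psi^{*}\lambda-\lambda]=-[\sigma]+[\sigma]=0$. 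Thus $\widehat\psi^{*}\lambda-\lambda = -dh$ for some $h\in C^{\infty}(\Sigma)$; since this $1$-form vanishes near $\partial\Sigma$, $h$ is locally constant there, and after adding a constant we may take $h>0$.

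The one thing that is not a formality is the cohomological bookkeeping in the last paragraph: one cannot simply argue ``$\phi_{1}$ is isotopic to the identity, hence $\phi_{1}^{*}\lambda$ is cohomologous to $\lambda$,'' because $\lambda$ is not closed. One has to run the integrated Cartan-formula computation, in which the non-closed contribution $\iota_{X}\lambda$ only produces an exact term while the closed form $\sigma$ is what survives in cohomology. The remaining points --- solvability for $X$, completeness of its flow, and the behavior near the boundary --- are immediate from compactness of $\Sigma$ and the compact support of $\psi$.
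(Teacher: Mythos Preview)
Your argument is correct and follows essentially the same route as the paper: define $X$ by $\iota_{X}d\lambda=-\sigma$ and set $\widehat\psi=\psi\circ\phi_{1}$. The only simplification the paper makes is to note that $\lie{X}\sigma = d\iota_{X}\sigma + \iota_{X}d\sigma = 0$ (since $\iota_{X}\sigma=-d\lambda(X,X)=0$ and $d\sigma=0$), so $\phi_{t}^{*}\sigma=\sigma$ holds on the nose, giving exactness with the explicit primitive $h=-\int_{0}^{1}\phi_{t}^{*}(\iota_{X}\lambda)\,dt$ and avoiding your cohomological detour.
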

\begin{proof}
  Let us denote the $1$-form $\psi^*\lambda-\lambda$ by $\mu$.  Since
  $d\lambda$ is non-degenerate, we find a unique solution $Y$ to the
  equation $i_Y d\lambda=-\mu$.  The flow of the vector field $Y$
  preserves $d\lambda$, because $\mu$ is closed,
  \begin{equation*}
    \lie{Y} d\lambda = d \iota_Y d\lambda = - d\mu = 0\;.
  \end{equation*}
  Since $\psi$ is the identity near the boundary, $\mu$ and hence $Y$
  vanishes near the boundary.  If we denote the time-$t$ flow of $Y$
  by $\phi_t$, then we see that $\widehat\psi = \psi\circ\phi_1$ is a
  symplectomorphism that is the identity near the boundary.  Note that
  $\lie{Y} \mu = 0$, so $\phi_t^*\mu = \mu$ for all $t$.  We check
  that the difference of the pullback of $\lambda$ and $\lambda$ is
  indeed exact.  We have
  \begin{equation*}
    (\psi \circ \phi_1)^* \lambda -\lambda=\phi_1^*(\mu+\lambda)
    -\lambda= \mu+\phi_1^*\lambda -\lambda \; .
  \end{equation*}
  On the other hand, we can express the difference $\phi_1^*
  \lambda-\lambda$ as
  \begin{equation*}
    \begin{split}
      \phi_1^*\lambda-\lambda& = \int_0^1\frac{d}{dt}
      \phi_t^*\lambda\, dt = \int_0^1 \bigl(\phi_t^* \mathcal L_Y
      \lambda\bigr) \, dt = \int_0^1 \phi_t^* \bigl( i_Y d\lambda + d
      (i_Y\lambda)
      \bigr)\, dt  \\
      &= -\mu + d \int_0^1 \phi_t^*(i_Y\lambda)\, dt \;.
    \end{split}
  \end{equation*}
  Moving $\mu$ to the left-hand-side, we see that $\mu+\phi_1^*
  \lambda -\lambda$ is exact, which shows the claim of the lemma.
\end{proof}

Now we can define
\begin{equation*}
  A_{(\Sigma,\psi)} := \Sigma \times \R / (x,\phi)\sim (\psi(x),\phi+h(x)) \;.
\end{equation*}
This mapping torus carries the contact form
\begin{equation*}
  \alpha = \lambda + d\phi \;.
\end{equation*}
Since $\psi$ is the identity near the boundary of $\Sigma$, a
neighborhood of the boundary looks like
\begin{equation*}
  \bigl(-\frac{1}{2},0\bigr] \times \partial \Sigma \times  S^1 \;,
\end{equation*}
with contact form
\begin{equation*}
  \alpha = e^r\,\lambda|_{\partial \Sigma} + d\phi \;.
\end{equation*}
Denote the annulus $\bigl\{z\in \C\bigm|\, r < \abs{z} < R\bigr\}$ by
$A(r,R)$.  We can glue the mapping torus $A_{(\Sigma,\psi)}$ along its boundary to
\begin{equation*}
  B_\Sigma:=\partial \Sigma \times D^2
\end{equation*}
using the map
\begin{equation*}
  \begin{split}
    \Phi_{\mathrm{glue}}:\, \partial \Sigma \times A(1/2,1)
    & \longrightarrow (-1/2,0] \times \partial \Sigma \times S^1 \\
    \bigl(x; re^{i\phi}\bigr) &\longmapsto \bigl(1/2-r, x, \phi\bigr)
    \;.
  \end{split}
\end{equation*}
Pulling back the form $\alpha$ by $\Phi_{\mathrm{glue}}$, we obtain
\begin{equation*}
  e^{1/2-r}\, \lambda|_{\partial \Sigma} + d\phi
\end{equation*}
on $\Sigma \times A(1/2,1)$,
which can be easily extended to a contact form
\begin{equation*}
  \beta = h_1(r)\, \lambda|_{\partial \Sigma} + h_2(r)\, d\phi 
\end{equation*}
on the interior of $B_\Sigma$ by requiring that
$h_1$ and $h_2$ are functions from $[0,1)$ to $\R$ whose behavior is
indicated in Figure~\ref{fig:functions_binding}; $h_1(r)$ should have
exponential drop-off and $h_2(r)$ should quadratically increase near
$0$ and be constant near $1$.

\begin{figure}[htp]
  \centering
  \includegraphics[width=0.6\textwidth,clip]{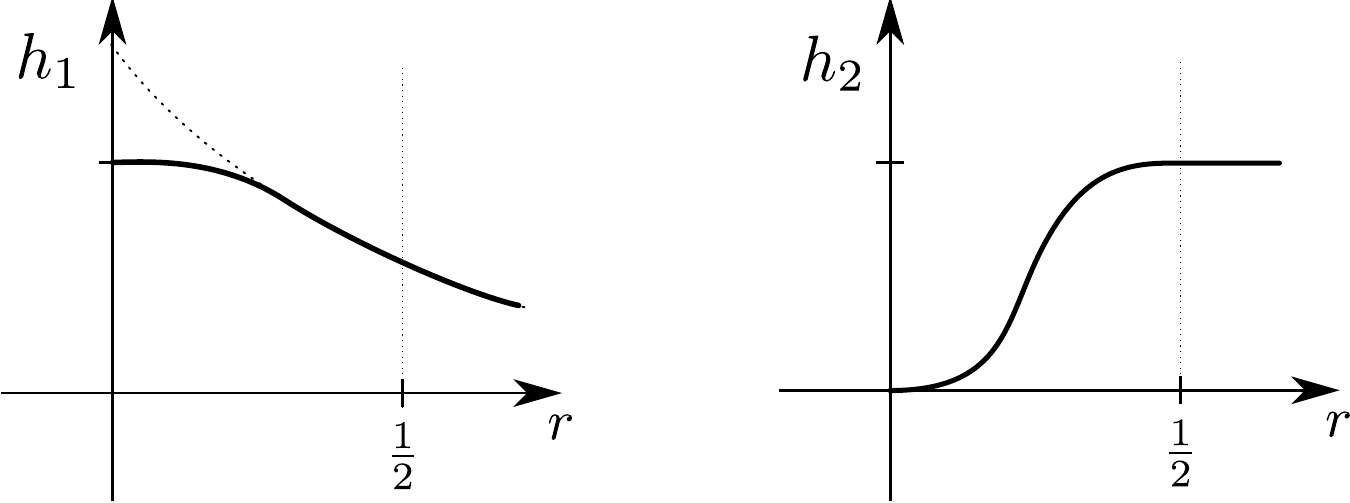}
  
  \caption{Functions for the contact form near the
    binding}\label{fig:functions_binding}
\end{figure}

The union $M := A_{(\Sigma,\psi)} \cup_\partial B_\Sigma$ is called an \textbf{abstract open
  book} for $M$.  Note that the contact forms $\alpha$ on $A_{(\Sigma,\psi)}$ and
$\beta$ on $B_\Sigma$ glue together to a globally defined contact form.

We shall call the resulting contact manifold, which we denote by
$\open( \Sigma,\lambda;\psi)$, a \textbf{contact open book}.
We shall sometimes drop the primitive $\lambda$ of the symplectic form in our later notation.

\begin{remark}
Note that $\open(\Sigma,\lambda;\psi)$ has the structure of a fibration over $S^1$ away from the set $B$.
Hence we can talk about the monodromy of an open book, which can be obtained by lifting the tangent vector field to $S^1$, given by $\partial_\phi$, to a vector field on $A$.
If we rescale the function $h$ to $2\pi$ then the time-$2\pi$ flow gives the monodromy.
Note that a positive function times the Reeb vector field is a suitable lift of $\partial_\phi$.
As a result, we see that the monodromy is given by $\psi^{-1}$.

We should also point out that there are various conventions in use at this point.
Some papers refer to $\psi$ as the monodromy and Milnor \cite{Milnor:singular_points} used the word \emph{characteristic homeomorphism}.
\end{remark}

\begin{definition}
  An \textbf{open book} on $M$ is a pair $(K,\theta)$, where
\begin{itemize}
\item{} $K$ is a codimension $2$ submanifold of $M$ with trivial
  normal bundle, and
\item{} $\theta:\, M-B\to S^1$ endows $M - K$ with the structure of a
  fiber bundle over $S^1$ such that $\theta$ gives the angular
  coordinate of the $D^2$--factor of a neighborhood $B\times D^2$ of
  $B$.
\end{itemize}
\end{definition}

The set $K$ is called the \textbf{binding} of the open book. A fiber
of $\theta$ together with the binding is called a \textbf{page} of the
open book.

\begin{remark}
The typical situation of an open book is the following.
Let $K$ be a knot in a $3$-manifold.
For special knots, so-called fibered knots, the complement fibers over $S^1$ in a nice way: this is equivalent to an open book.
A well-known example is the unknot in $S^3$.
\end{remark}

In order to define the notion of adapted open book, we need to discuss the orientations involved.
Suppose $M$ is an oriented manifold with an open book $(K,\theta)$.
Since we regard $S^1$ as an oriented manifold, each page $\Sigma$ gets an induced orientation such that the orientation of $M-K$ as a bundle over $S^1$ matches the one coming from $M$. 
If this orientation of the page $\Sigma$ matches the orientation as a symplectic manifold, we call a symplectic form $\omega$ on $\Sigma$ {\bf positive}.
We shall orient the binding $K$ as the boundary of a page $\Sigma$ using the outward normal.
If, on the other hand, this orientation matches the one coming from a contact form $\alpha$, i.e.~$\alpha\w d\alpha^n$, then we say that $\alpha$ induces a {\bf positive contact structure}.

\begin{definition}
  A positive contact structure $\xi$ on an oriented manifold $M$ is
  said to be \textbf{carried by an open book} $(B, \theta)$ if $\xi$
  admits a defining contact form $\alpha$ satisfying the following
  conditions.
  \begin{itemize}
  \item{} $\alpha$ induces a positive contact structure on $B$, and
  \item{} $d\alpha$ induces a positive symplectic structure on each
    fiber of $\theta$.
  \end{itemize}
  A contact form $\alpha$ satisfying these conditions is said to be
  \textbf{adapted} to $(B, \theta)$.
\end{definition}

\begin{lemma}
  Suppose that $B$ is a connected contact submanifold of a contact manifold $(M,\xi)$.  A
  contact form $\alpha$ for $(M,\xi)$ is adapted to an open book $(B,
  \theta)$ if and only if the Reeb field $R_\alpha$ of $\alpha$ is
  positively transverse to the fibers of $\theta$,
  i.e.~$R_\alpha(\theta) > 0$.
\end{lemma}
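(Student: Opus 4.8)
The plan is to reduce the two clauses in the definition of ``adapted'' to conditions on $R_\alpha$ by means of the single structural fact that the kernel of $d\alpha$ at every point is the line spanned by the Reeb field, and then to match orientations according to the conventions fixed above. Here $\dim M=2n+1$, a page has dimension $2n$, and $B$ has dimension $2n-1$. Since $\alpha(R_\alpha)=1$, $\iota_{R_\alpha}d\alpha=0$ and $d\alpha$ has rank $2n$, for $p\in M\setminus B$ the page $\Sigma=\theta^{-1}(\theta(p))$ has $T_p\Sigma=\ker d\theta_p$ of dimension $2n$, so $d\alpha|_{T_p\Sigma}$ is nondegenerate if and only if $T_p\Sigma\cap\R R_\alpha=0$, i.e.\ if and only if $R_\alpha(\theta)(p)\ne 0$. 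Because $B$ has codimension $2$ and $M$ is connected, $M\setminus B$ is connected, so once $R_\alpha(\theta)$ is nowhere zero its sign there is constant and can be read off at a single convenient point.

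For that sign fix $p\in M\setminus B$, a positively oriented frame $(e_1,\dots,e_{2n})$ of $T_p\Sigma$, and a vector $V$ with $d\theta(V)=1$. From $\iota_{R_\alpha}d\alpha=0$ and $\alpha(R_\alpha)=1$ one gets
\[
  \bigl(\alpha\w (d\alpha)^{n}\bigr)(R_\alpha,e_1,\dots,e_{2n})=\bigl(d\alpha|_{T_p\Sigma}\bigr)^{n}(e_1,\dots,e_{2n}),
\]
while writing $R_\alpha=R_\alpha(\theta)(p)\,V+w$ with $w\in T_p\Sigma$ shows that the frames $(R_\alpha,e_1,\dots,e_{2n})$ and $(V,e_1,\dots,e_{2n})$ of $T_pM$ represent orientations that differ by the sign of $R_\alpha(\theta)(p)$. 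By the conventions fixed above $(V,e_1,\dots,e_{2n})$ is positively oriented for $M$, and $\alpha\w(d\alpha)^{n}>0$ since $\xi$ is positive; hence $d\alpha|_{T_p\Sigma}$ induces the page orientation precisely when $R_\alpha(\theta)(p)>0$. This proves the equivalence ``$d\alpha$ induces a positive symplectic structure on every page'' $\Longleftrightarrow$ ``$R_\alpha(\theta)>0$ on $M\setminus B$''. In particular the forward implication of the lemma is complete, since there the condition on the binding is part of the hypothesis and the conclusion $R_\alpha(\theta)>0$ concerns only $M\setminus B$.

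For the converse it remains to deduce from $R_\alpha(\theta)>0$ that $\alpha$ induces a positive contact structure on $B$. Work in the tubular neighbourhood $B\times D^2$ supplied by the open book, with $\theta$ the angular coordinate and $x,y$ Cartesian coordinates on the $D^2$--factor, so that $(x^2+y^2)\,d\theta=x\,dy-y\,dx$. Writing $R_\alpha=R_\alpha^x\,\partial_x+R_\alpha^y\,\partial_y+(\text{a field tangent to the }B\text{--factor})$, the function $g:=(x^2+y^2)\,R_\alpha(\theta)=x R_\alpha^y-y R_\alpha^x$ extends smoothly over $B\times D^2$, is nonnegative, and vanishes exactly on $B\times\{0\}$; restricted to any normal disc $\{b\}\times D^2$ it attains an interior minimum at the centre, so the differential of that restriction vanishes there, which (since $\partial_x g=R_\alpha^y$ and $\partial_y g=-R_\alpha^x$ along $B$) forces $R_\alpha^x=R_\alpha^y=0$ along $B$. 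Thus $R_\alpha$ is tangent to $B$, and from $\alpha|_B(R_\alpha|_B)=1$ and $\iota_{R_\alpha|_B}d(\alpha|_B)=0$ one sees that $R_\alpha|_B$ is the Reeb field of the contact form $\alpha|_B$. For the orientation, the conventions identify the orientation of $M$ near $B$ with the standard orientation of the $D^2$--factor followed by the boundary orientation of $B$; evaluating $\alpha\w(d\alpha)^{n}$ on a frame $(R_\alpha,\partial_x,\partial_y,f_1,\dots,f_{2n-2})$ with $(f_1,\dots,f_{2n-2})$ a frame of $\ker(\alpha|_B)$ reduces, exactly as in the previous paragraph, to $(d\alpha)^{n}(\partial_x,\partial_y,f_1,\dots,f_{2n-2})$, whose sign is controlled by the linearisation at $B$ of the Reeb flow in the normal $x,y$--directions. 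The hypothesis $R_\alpha(\theta)>0$ near $B$ says precisely that this linearised normal flow turns in the direction of increasing $\theta$ --- equivalently that the corresponding $2\times 2$ matrix has positive determinant --- and unwinding the identifications gives $\alpha|_B\w(d\alpha|_B)^{n-1}>0$, so $\alpha$ induces a positive contact structure on $B$, and $\alpha$ is adapted.

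The main obstacle is this last step. The linear algebra and the page computation are routine, but recovering the binding clause requires descending into the tubular--neighbourhood model and keeping track of several sign conventions simultaneously: the orientation of $M$, the page orientation, the boundary orientation of $B$, and the angular direction of $\theta$. The observation that makes this tractable is that $(x^2+y^2)\,R_\alpha(\theta)$ extends smoothly across $B$: this both pins $R_\alpha$ to $TB$ and encodes the positivity of the normal rotation of the Reeb flow, which is the geometric content of compatibility with the binding orientation.
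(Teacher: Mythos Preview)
Your treatment of the page condition --- reducing nondegeneracy of $d\alpha$ on $T_p\Sigma$ to $R_\alpha\notin T_p\Sigma$, and then matching orientations via the frame computation --- is correct and is essentially the paper's argument, only written out in a bit more detail.

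The binding clause is where you and the paper diverge. The paper's argument is a single line of Stokes: having already shown $d\alpha^n>0$ on each page $P$, one has
\[
\int_{B}\alpha\wedge d\alpha^{\,n-1}=\int_{\partial P}\alpha\wedge d\alpha^{\,n-1}=\int_{P}d\alpha^{\,n}>0,
\]
and since $B$ is assumed to be a \emph{connected} contact submanifold, the nowhere-vanishing form $\alpha|_B\wedge(d\alpha|_B)^{n-1}$ has constant sign, hence is positive. This uses the two hypotheses in the statement (contact submanifold, connected) in an essential and transparent way.

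Your local approach is more ambitious, and the first part --- showing $R_\alpha$ is tangent to $B$ via the smooth extension $g=xR_\alpha^y-yR_\alpha^x$ and the interior-minimum trick --- is a nice observation. But the decisive step has a gap. You assert that the sign of $(d\alpha)^n(\partial_x,\partial_y,f_1,\dots,f_{2n-2})$ at a point of $B$ is ``controlled by the linearisation at $B$ of the Reeb flow in the normal $x,y$--directions'' and that $R_\alpha(\theta)>0$ forces that linearised matrix to have positive determinant. Neither claim is justified. The value $(d\alpha)^n(\partial_x,\partial_y,\dots)$ is a first-order quantity in $\alpha$ at the point, while the normal linearisation of $R_\alpha$ involves the derivatives $\partial_x R_\alpha^x,\partial_y R_\alpha^x,\dots$, which are second-order; there is no direct identification between them, and the phrase ``unwinding the identifications'' hides exactly the computation that needs to be done. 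Moreover you never use the hypothesis that $B$ is connected, which in the paper's argument is what turns an integral positivity into a pointwise one; if your local argument really worked it would prove more than the lemma claims. The cleanest fix is simply to replace this paragraph by the Stokes computation above.
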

\begin{proof}
  If $d\alpha$ is positive on each fiber of $\theta$, then we can find
  tangent vectors $v_1,\dotsc, v_{2n}$ to the page at a point $x$ such
  that $i_{v_1\w\ldots \w v_{2n}}d\alpha^n>0$.  Hence
  \begin{equation*}
    \iota_{R\w v_1\w\ldots \w v_{2n}}\alpha \w d\alpha^n>0 \;.
  \end{equation*}
  Since the pages and the $S^1$ direction also orient the manifold, we
  see that the Reeb field is positively transverse to the pages.

  Conversely, if $R_\alpha$ is positively transverse to the the fibers
  of $\theta$, then $i_{R_\alpha} \alpha\w d\alpha^n>0$, so in
  particular $d\alpha$ is a positive symplectic form on each fiber.

We assume $B$ to be a contact submanifold, so we only need to check positivity.
Note that
  \begin{equation*}
    \int_{\partial P} \alpha\w d\alpha^{n-1}=\int_P 
    d \alpha\w d\alpha^{n-1} = \int_P d\alpha^{n}>0 \;.
  \end{equation*}
Since the binding $B$ was assumed to be connected, we see that $(B,\theta)$ is a supporting open book.
\end{proof}

\begin{proposition}
  An abstract contact open book $\open(\Sigma, \psi)$ admits a natural
  open book carrying the contact structure $\xi$ in the above
  construction.
\end{proposition}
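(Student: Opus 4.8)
The plan is to exhibit the open book $(B,\theta)$ directly and then check that the globally defined contact form produced in the construction is adapted to it. For the binding I would take $B := \partial\Sigma\times\{0\}\subset B_\Sigma\subset M$; this is a codimension $2$ submanifold with trivial normal bundle, since a neighborhood of it in $M$ is literally $B_\Sigma=\partial\Sigma\times D^2$, and near $B$ the map $\theta$ is, by construction, the angular coordinate of this $D^2$--factor. For $\theta\colon M\setminus B\to S^1$ I would set it equal to the $D^2$--angular coordinate on $B_\Sigma\setminus B=\partial\Sigma\times(D^2\setminus\{0\})$ and to the projection of the mapping torus $A_{(\Sigma,\psi)}$ onto its base circle on the other piece. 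The one point that is not purely formal is that $\phi$ does not descend to $A_{(\Sigma,\psi)}$ (the identification $(x,\phi)\sim(\psi(x),\phi+h(x))$ shifts it by $dh$); I would deal with this by observing that $d\alpha=d\lambda$ has one--dimensional kernel spanned by $\partial_\phi$ and $\alpha(\partial_\phi)=1$, so the Reeb field on $A_{(\Sigma,\psi)}$ is $\partial_\phi$, this Reeb flow is complete and has the image of $\Sigma\times\{0\}$ as a global cross--section, and dividing flow time by the (smooth, positive) first--return time produces a smooth submersion $A_{(\Sigma,\psi)}\to S^1$ with fibers diffeomorphic to $\Sigma$ along which the Reeb field is positively transverse. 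After the rescaling of $h$ mentioned in the Remark following the construction (so that $h\equiv2\pi$ near $\partial\Sigma$), this submersion agrees near $\partial A_{(\Sigma,\psi)}$ with the circle coordinate of the collar, hence via $\Phi_{\mathrm{glue}}$ with the $D^2$--angular coordinate on $B_\Sigma$, and the two pieces patch to a global fibration $\theta$.

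It then remains to show that the globally defined contact form $\alpha$ (equal to $\lambda+d\phi$ on $A_{(\Sigma,\psi)}$ and to $\beta$ on $B_\Sigma$) is adapted to $(B,\theta)$. For this I would invoke the Lemma characterizing adapted contact forms: it suffices to check that $B$ is a connected contact submanifold and that the Reeb field of $\alpha$ is positively transverse to the fibers of $\theta$. The first holds because $\alpha|_B=h_1(0)\,\lambda|_{\partial\Sigma}$ and $\lambda|_{\partial\Sigma}$ is a contact form, $\partial\Sigma$ being a regular level set of the $\omega$--convex function (one assumes $\partial\Sigma$ connected, or argues componentwise). For the transversality: on $A_{(\Sigma,\psi)}$ we have already arranged that the Reeb field is $\partial_\phi$ with $\partial_\phi(\theta)>0$; on $B_\Sigma$, a short computation of the Reeb field $R_\beta$ from $\iota_{R_\beta}d\beta=0$ and $\beta(R_\beta)=1$, using $d\beta=h_1'\,dr\w\lambda|_{\partial\Sigma}+h_1\,d\lambda|_{\partial\Sigma}+h_2'\,dr\w d\phi$ and the behavior of $h_1,h_2$ from Figure~\ref{fig:functions_binding} (in particular $h_1'<0$, $h_2'\ge0$, $h_1>0$), shows that $R_\beta$ is a combination of $\partial_\phi$ and the Reeb field of $\lambda|_{\partial\Sigma}$ with a strictly positive $\partial_\phi$--coefficient, so $d\phi(R_\beta)>0$ and $R_\beta$ is positively transverse to the fibers of $\theta$ on $B_\Sigma\setminus B$. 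Finally a routine orientation check --- orient $M$ by $\alpha\w d\alpha^n$, which on $A_{(\Sigma,\psi)}$ is $(d\lambda)^n\w d\phi$, i.e.~the symplectic orientation of the page followed by the Reeb/$S^1$ direction --- shows that this transversality is positive in the sense of the Definition, so by the Lemma $\alpha$ is adapted and $\xi=\ker\alpha$ is carried by $(B,\theta)$.

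The step I expect to be the main obstacle is the one flagged in the first paragraph: upgrading the informal assertion that $A_{(\Sigma,\psi)}$ fibers over $S^1$ to an honest smooth submersion that matches the $D^2$--angular coordinate across $\Phi_{\mathrm{glue}}$, the nonconstancy of the return function $h$ being what makes this less automatic than it looks; the Reeb cross--section picture together with the normalization of $h$ near $\partial\Sigma$ is what resolves it. The remaining points --- the codimension and triviality of the normal bundle, the local form of $\theta$ near the binding, the Reeb computation on $B_\Sigma$, and the orientation comparison --- are all routine.
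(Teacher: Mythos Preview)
Your approach is the same as the paper's: take $B=\partial\Sigma\times\{0\}$, define $\theta$ by the $D^2$--angle on $B_\Sigma$ and by the mapping--torus projection on $A_{(\Sigma,\psi)}$, and then invoke the Reeb--transversality characterization of adapted forms. The paper's proof is essentially a three--line sketch; what you have added are exactly the two places where that sketch is thin. First, the paper simply writes ``we use the fact that $A$ is a fiber bundle over $S^1$'' without producing the projection; your Reeb--flow/first--return construction is a correct way to do this, and your observation that the raw coordinate $\phi$ does not descend is precisely why some care is needed. Second, the paper asserts that ``the Reeb field \dots\ is $\partial_\phi$'', which is literally true only on $A_{(\Sigma,\psi)}$; on $B_\Sigma$ the Reeb field of $\beta=h_1\lambda|_{\partial\Sigma}+h_2\,d\phi$ acquires a component along the Reeb field of $\lambda|_{\partial\Sigma}$, and your computation showing that the $\partial_\phi$--coefficient stays positive is what actually justifies positive transversality there. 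So your argument is correct and more complete than the paper's, not different from it.
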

\begin{proof}
  We define the binding of the abstract contact open book
  $\open(\Sigma, \psi)$ to be the submanifold $\partial \Sigma\times
  \{0\}$.  The map $\theta$ from $M - B$ to $S^1$ can be defined by
  putting $\theta(x)=\phi$ if $x=(p;r,\phi)$ is a point in $\partial
  \Sigma\times D^2$.  For points in $A$, we use the fact the $A$ is a
  fiber bundle over $S^1$.  Moreover, the definitions coincide on the
  overlap of $A$ and $\partial \Sigma\times D^2$.

  The Reeb field of the abstract contact open book $\open(\Sigma,\lambda;\psi)$ as given by the
  above construction is $\partial_\phi$, so it is positively transverse to all pages.
This implies that the open book carries the associated contact structure.
\end{proof}

\subsection{Basic properties of open books}

\subsubsection{Order and monodromy}

In general, the resulting contact manifold depends on the monodromy,
but there are some symmetries.  For instance, if $\Sigma$ is a convex
symplectic manifold and $\psi_1$ and $\psi_2$ are symplectomorphisms,
then
\begin{equation*}
  \open(\Sigma,\psi_2^{-1} \circ \psi_1\circ \psi_2)\cong
  \open(\Sigma,\psi_1) \;.
\end{equation*}
Indeed, we can simply regard the mapping torus of the open book as three products $\Sigma \times I$ glued together.  
Gluing them in another order gives the same result.

This observation also implies the cyclic symmetry property,
\begin{equation*}
  \open(\Sigma,\psi_1\circ \psi_2)\cong
  \open(\Sigma,\psi_2 \circ \psi_1) \;.
\end{equation*}
Indeed, if we conjugate $\psi_2 \circ \psi_1$ by $\psi_2$, we get the above expression.

\subsection{Important examples of monodromies}
In general, the group of symplectomorphisms on a symplectic manifold is poorly understood.
In fact, in many cases, such $(D^6,\omega_0)$, it is unknown whether every symplectomorphism is isotopic to the identity (relative to the boundary).

There is, however, one way to construct candidates of symplectomorphisms that are in general not isotopic to the identity.
Suppose that $(W,\omega)$ is a symplectic manifold with an embedded Lagrangian sphere $L\subset W$.
By the Weinstein neighborhood theorem, a neighborhood $\nu_W(L)$ is symplectomorphic to the canonical symplectic structure on $(T^*S^n,d\lambda_{can})$.

Hence we consider the symplectic manifold $(T^*S^{n},d\lambda_{can})$, where $\lambda_{can}$ is the canonical $1$-form. In local coordinates, this form is given by $\lambda_{can}=p\, dq$.
To describe a so-called Dehn twist, we first regard this manifold as a submanifold of $R^{2n+2}$ by using coordinates
$$
(q,p) \in \R^{2n+2}
$$
subject to the following relations
\begin{equation}
\label{eq:coordinatesTSn}
q \cdot q=1 \text{ and } q \cdot p=0.
\end{equation}
With these coordinates the canonical $1$-form $\lambda_{can}$ on $T^*S^n$ is given by
$$
\lambda_{can}=p \, dq.
$$
Define an auxiliary map describing the normalized geodesic flow
$$
\sigma_t(q,p)=
\left(
\begin{array}{cc}
\cos t & |p|^{-1} \sin t \\
-|p|\sin t & \cos t 
\end{array}
\right)
\left(
\begin{array}{c}
q \\ p
\end{array}
\right) .
$$
Then define
$$
\tau(q,p)=
\left \{
\begin{array}{ll}
\sigma_{g_1(|p|)}(q,p) & \text{ if } p\neq0 \\
-\id & \text{ if } p=0.
\end{array}
\right.
$$
Here $g_1$ is a smooth function with the following properties.
\begin{itemize}
\item{} $g_1(0)=\pi$ and $g_1'(0)<0$.
\item{} Fix $p_0>0$. The function $g_1(|p|)$ decreases to $0$ at $p_0$ after which it is identically $0$.
\end{itemize}
Note that the conditions imply that $\tau$ is actually a smooth map. 
See Figure~\ref{fig:Dehn_twist}.
The map $\tau$ is called a {\bf (generalized) right-handed Dehn twist}.

\begin{figure}[htp]
 \centering
 \includegraphics[width=0.4\textwidth,clip]{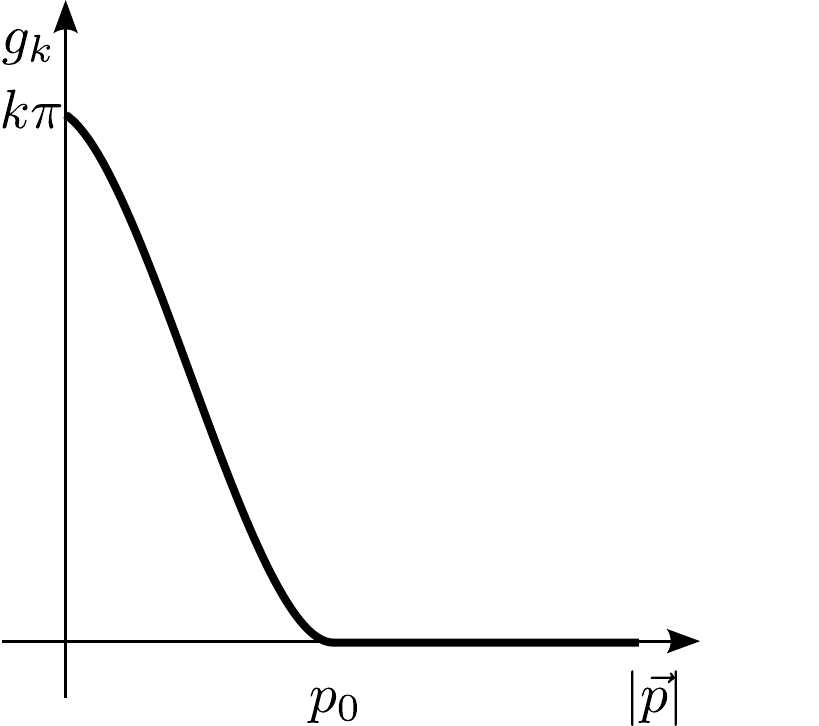}%
 \caption{The amount of geodesic flow for a $k$-fold Dehn twist}
 \label{fig:Dehn_twist}
\end{figure}

Since $\tau$ is the identity near the boundary of $T^*S^n$, we can extend $\tau$ to a symplectomorphism of $(W,\omega)$: simply extend $\tau$ to be the identity outside the support of $\tau$.

\section{Contact surgery and symplectic handle attachment}
\label{sec:symplectic_handle_attachment}

Let $(M,\xi)$ be a contact manifold, and let $S$ in $M$ be an
isotropic $k$--sphere with a trivialization $\epsilon$ of its
conformal symplectic normal bundle.  Then we can perform contact
surgery along $(S,\epsilon)$.  We shall write the surgered contact
manifold as
\begin{equation*}
  \widetilde {(M,\xi)}_{S,\epsilon}
\end{equation*}
In case of Legendrian surgery, there is no choice for the framing
$\epsilon$, and consequently, we shall drop the framing from the
notation in that case.

We shall now describe a model for contact surgery in terms of symplectic handle attachment. For later computations, we slightly modify Weinstein's original construction, \cite{Weinstein:surgery}.

\subsection{"Flat" Weinstein model for contact surgery}
\label{sec:flat_weinstein}
Here we shall discuss a slightly modified version of the Weinstein model for contact surgery.
Let $(M,\xi=\ker \alpha)$ be a contact manifold and suppose that $S$ is an isotropic $k$--sphere in $(M,\xi)$ with trivial conformal symplectic normal bundle, trivialized by $\epsilon$.
Using this framing $\epsilon$ and a neighborhood theorem, see Theorem~6.2.2 in \cite{Geiges:contacttopology}, we can find a \emph{strict} contactomorphism
\begin{eqnarray*}
\psi:~(\nu(S),\alpha) & \longrightarrow & \left( \R \times T^*S^k\times \R^{2(n-k-1)},dz+p\, dq+\frac{1}{2}(x\, dy-y\, dx) \right),
\end{eqnarray*}
where we regard $\R \times T^*S^k\times \R^{2(n-k-1)}$ as a neighborhood of $\{ 0 \} \times S^k \times \{0 \}$.

\begin{remark}
We should point out that the contactomorphism $\psi$ depends on the trivialization $\epsilon$.
As a result, the entire construction we shall describe now, depends on this choice.
Note that this is unavoidable, since even smoothly the result of surgery depends on the choice of framing.
\end{remark}

A priori, we can only expect a small neighborhood of $S$ to be contactomorphic to a \emph{small} subset of $\R \times T^*S^k\times \R^{2(n-k)}$ via a strict contactomorphism, but we can enlarge this neighborhood by composing with the following non-strict contactomorphism
\begin{eqnarray*}
\phi_C:~\R \times T^*S^k\times \R^{2(n-k)} & \longrightarrow & \R \times T^*S^k\times \R^{2(n-k)}\\
(z,q,p;x,y) & \longmapsto & (Cz,q,Cp;\sqrt C x, \sqrt C y). 
\end{eqnarray*}

Now consider the following model for contact surgery and symplectic handle attachment.
Consider the symplectic manifold $(\R^{2n},\omega_0)$.
We shall use coordinates $(x,y;z,w)$, where there are $n-k-1$ pairs of $(x,y)$ coordinates and $k+1$ pairs of $(z,w)$ coordinates.
The symplectic form is then given by
$$
\omega_0=dx\w dy +dz\w dw.
$$
Note that the vector field
$$
X=\frac{1}{2}(x\partial_x+y \partial_y)+2z\partial_z -w \partial_w
$$
is Liouville for $\omega_0$.

Now consider the set 
$$
S_{-1}:=\{ (x,y,z,w)\in \R^{2n}~|~|w|^2=1 \}.
$$
The Liouville field $X$ is transverse to this set, and induces the contact form
$$
\alpha=\frac{1}{2}(x\, dy-y\, dx)+2z\, dw+w\, dz.
$$
We see that the sphere $\{ (x,y,z,w)~|~x=y=0,~z=0, |w|^2=1 \}\cong S^k$ describes an isotropic sphere in $S_{-1}$ with trivial conformal symplectic normal bundle.
We shall think of $S_{-1}$ as a neighborhood of the isotropic sphere $S$, in other words $S_{-1}$ can be thought of as the situation before surgery.
In fact, the set $S_{-1}$ is a standard neighborhood of an isotropic sphere of dimension $k$ with trivial normal bundle, since we have the following contactomorphism,
\begin{eqnarray*}
\psi_W:\R \times T^*S^{k} \times \R^{2(n-k-1)} & \longrightarrow & S_{-1} \\
(z,q,p,x,y) & \longmapsto & (x,y;zq+p,q).
\end{eqnarray*}
Here we regard the cotangent bundle $T^*S^k$ as a subspace of $\R^{2(k+1)}$ by using coordinates $(q,p)\in \R^{2(k+1)} $, where $q^2=1$ and $q \cdot p=0$. 
Note that $\psi_W$ is a strict contactomorphism,
\begin{eqnarray*}
\psi_W^*(\frac{1}{2}(x\, dy-y\, dx)+2z\, dw, dw+w\, dz) && =\\
\frac{1}{2}(x\, dy-y\, dx)+2(zq+p)dq+q\, d(zq)+q\, dp
&& =\frac{1}{2}(x\, dy-y\, dx)+p\, dq+dz.
\end{eqnarray*}
To see that the latter step holds, use that $q\, dq=0$ and $p\, dq+q\, dp=0$.

We can combine the above three maps to obtain a contactomorphism from $\nu(S)\subset M$ to $S_{-1}$ in the Weinstein model
\begin{eqnarray}
\label{eq:to_neighborhood_in_Weinsteinmodel}
\Phi_C:=\psi_W\circ \phi_C \circ \psi:~\nu(S) &\longrightarrow & S_{-1}.
\end{eqnarray}
This map is not a strict contactomorphism, but since it multiplies the contact form with a constant rather than an arbitrary function, we can adapt the following lemma from \cite{Geiges:contacttopology}, Lemma~5.2.4, for a gluing construction.
\begin{lemma}
\label{lemma:construct_symplectomorphism}
For $i=0,1$, let $(M_i,\alpha_i)$ be a (not necessarily closed) contact type hypersurface in a symplectic manifold $(W_i,\omega_i)$ with respect to the Liouville vector field $Y_i$. 
Suppose $\phi:(M_0,\alpha_0)\to (M_1,\alpha_1)$ is a contactomorphism such that $\phi^*\alpha_1=C\alpha_0$ for some constant $C$.
Then $\phi$ extends to a symplectomorphism between neighborhoods of $M_0$ and $M_1$ by sending flow lines of $Y_0$ to flow lines of $Y_1$.
\end{lemma}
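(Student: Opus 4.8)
The plan is to produce the extension by flowing outward and inward from the hypersurfaces using the respective Liouville fields, and to check that the resulting diffeomorphism intertwines the symplectic forms. First I would recall the standard local picture near a contact-type hypersurface: since $Y_i$ is a Liouville vector field transverse to $M_i$, its flow $\vartheta^i_t$ is defined for $t$ in some interval $(-\epsilon_i,\epsilon_i)$ on all of $M_i$ (shrinking $M_i$ if it is noncompact and the flow time is not uniform), and the map $(t,m)\mapsto \vartheta^i_t(m)$ is a diffeomorphism from $(-\epsilon_i,\epsilon_i)\times M_i$ onto a neighborhood $U_i$ of $M_i$ in $W_i$. Under this identification one has $\omega_i = d(e^t \alpha_i)$, because $\mathcal{L}_{Y_i}\omega_i = \omega_i$ forces the Liouville form $\iota_{Y_i}\omega_i$ to pull back to $e^t\alpha_i$ on the collar (this is the content of Lemma~5.2.2-type statements in \cite{Geiges:contacttopology}, and I would just cite it).

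Next I would write down the candidate extension. Using the collar coordinates, define
\begin{equation*}
  \widehat\phi:\, (-\epsilon,\epsilon)\times M_0 \longrightarrow U_1,\qquad
  \widehat\phi(t,m) = \vartheta^1_{\,t+\log C}\bigl(\phi(m)\bigr),
\end{equation*}
for $\epsilon$ small enough that $t+\log C$ stays in the flow interval of $Y_1$. By construction $\widehat\phi$ sends the flow line $\{(\,\cdot\,,m)\}$ of $Y_0$ through $m$ to the flow line of $Y_1$ through $\phi(m)$, reparametrized by the constant shift $\log C$, and it restricts to $\phi$ on $M_0$ up to that shift; composing with the translation $t\mapsto t-\log C$ on the source collar if one wants it to restrict to $\phi$ exactly on $\{0\}\times M_0$. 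It is a diffeomorphism onto its image because it is a composition of the collar chart, the time-$\log C$ flow (a diffeomorphism), and $\phi$.

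Then I would verify $\widehat\phi^*\omega_1 = \omega_0$. In the collar coordinates $\omega_0 = d(e^s\alpha_0)$ and $\omega_1 = d(e^t\alpha_1)$, and the map in coordinates is $(s,m)\mapsto (s+\log C,\phi(m))$, so
\begin{equation*}
  \widehat\phi^*\bigl(e^t\alpha_1\bigr) = e^{s+\log C}\,\phi^*\alpha_1 = e^s\, C \cdot \frac{1}{C}\,\alpha_0 = e^s\alpha_0,
\end{equation*}
using the hypothesis $\phi^*\alpha_1 = C\alpha_0$ and $e^{\log C}=C$; note this is where the fact that $C$ is a genuine constant, not a function, is essential — a nonconstant conformal factor would not be absorbed by a translation of the Liouville coordinate. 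Taking $d$ of both sides gives $\widehat\phi^*\omega_1 = \omega_0$ on the collar neighborhood. I do not expect any of this to be a real obstacle; the only point requiring care is bookkeeping the size of the neighborhood in the noncompact case, so that the flow times $t+\log C$ remain admissible — one simply shrinks $\epsilon$ (and hence the neighborhood of $M_0$) accordingly, which is harmless since the lemma only claims an extension to \emph{some} neighborhood.
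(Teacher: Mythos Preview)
Your approach is exactly the standard one; the paper does not give its own proof but simply cites Lemma~5.2.4 of \cite{Geiges:contacttopology}, whose argument is precisely the collar-and-shift construction you wrote down. There is, however, an arithmetic slip in your verification: with the shift $t=s+\log C$ and the hypothesis $\phi^*\alpha_1=C\alpha_0$ one obtains
\[
\widehat\phi^*(e^t\alpha_1)=e^{s+\log C}\,\phi^*\alpha_1 = C e^s\cdot C\alpha_0 = C^2 e^s\alpha_0,
\]
not $e^s\alpha_0$. The correct shift is $t=s-\log C$, which yields $\widehat\phi^*(e^t\alpha_1)=\tfrac{e^s}{C}\cdot C\alpha_0=e^s\alpha_0$ as desired. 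Your aside about composing with the translation $s\mapsto s-\log C$ on the source collar to force the map to restrict to $\phi$ on $\{0\}\times M_0$ is also off: that translation is not a symplectomorphism of $d(e^s\alpha_0)$, so you cannot freely post- or pre-compose with it. With the correct sign the map sends $M_0$ to the parallel hypersurface $\{-\log C\}\times M_1$ rather than to $M_1$ itself, but this is perfectly consistent with the lemma, which only asserts a symplectomorphism between \emph{neighborhoods} of $M_0$ and $M_1$ taking Liouville flow lines to Liouville flow lines. Once the sign is fixed, your argument is complete.
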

Furthermore, we can choose a large $C$ in Formula~\eqref{eq:to_neighborhood_in_Weinsteinmodel}, which means that we can get arbitrary large neighborhoods in the Weinstein model.
\begin{remark}
We can also adapt the proof of Proposition~3.1 in~\cite{CSK:Minimalatlas} to obtain a contactomorphism from $\nu(S)$ to the full Weinstein model, i.e.~a surjective map to $S_{-1}$.
This contactomorphism is in general not strict, or even admissible for Lemma~\ref{lemma:construct_symplectomorphism}.
Therefore we shall restrict ourselves to a contactomorphism as in Formula~\eqref{eq:to_neighborhood_in_Weinsteinmodel}.
\end{remark}

\subsubsection{Attaching a symplectic handle}
Let us begin by defining a symplectic handle.
The contactomorphism $\Phi_C$ identifies the neighborhood $\nu(S)\subset M$ with a neighborhood of the isotropic sphere in $S_{-1}$.
Suppose that the neighborhood provided by $\psi$ has size
$$
size_\psi(\nu(S)):=\max_{(x,y,z,w)\in \psi(\nu(S))}\sqrt{x^2+y^2+z^2}
=\tilde \epsilon .
$$
Then by choosing $C>2/\tilde \epsilon$ we can ensure that the neighborhood provided by $\Phi_C$ has size larger than $1$, i.e.~the maximal $(x,y,z)$ coordinates are larger than $1$.

We first define the profile for the handle. Fix a small $\delta>0$: this parameter serves as a smoothing parameter. 
Choose smooth functions $f,g:\R_{\geq 0} \to \R$ such that
\begin{itemize}
\item $f$ is increasing.
\item $f(w)=1$ for $w\in [0,1-\delta]$, $f(w)=w+\delta$ for $w>1-\delta/2$.
\item $g$ is increasing.
\item $g(z)=z$ for $z<1$, $g(w)=1+\delta$ for $w>1+\delta$.
\end{itemize}
~\\
See Figure~\ref{fig:functions_profile} for a sketch of these functions.
\begin{figure}[htp]
  \centering
  \includegraphics[width=0.6\textwidth,clip]{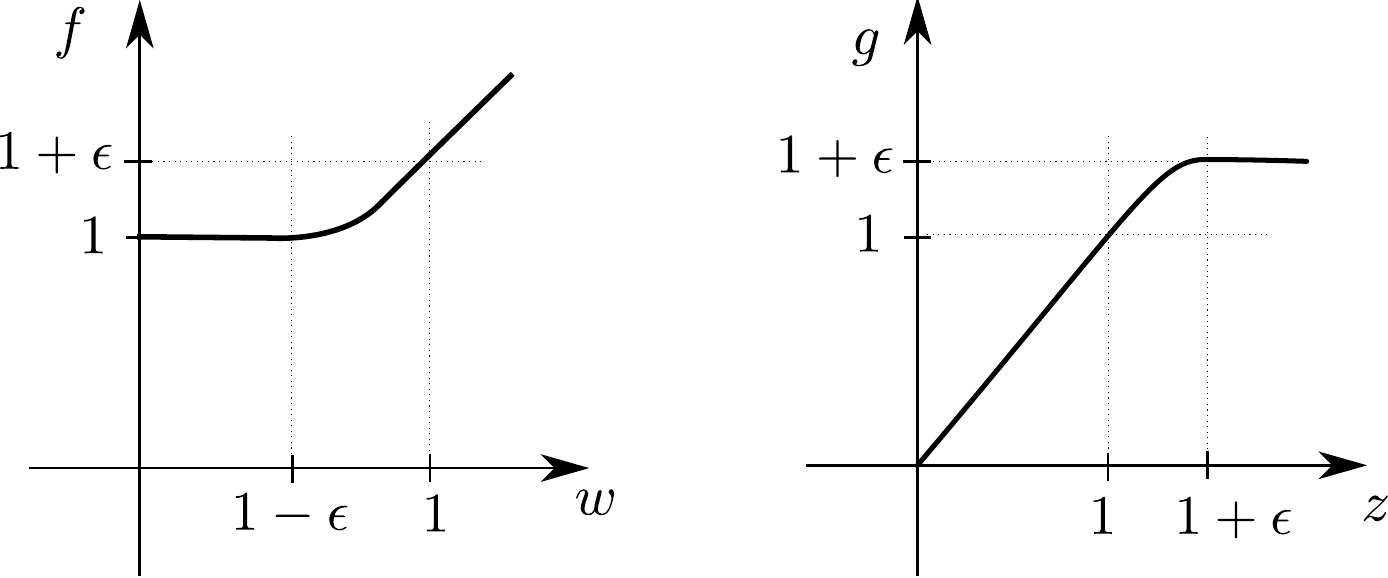}
  
  \caption{Functions for the profile of a symplectic handle}\label{fig:functions_profile}
\end{figure}
~\\

Define
$$
F(x,y,z,w):=-f(w^2)+g(x^2+y^2+z^2).
$$
Define a hypersurface $S_1:=\{ (x,y,z,w)~|~F(x,y,z,w)=0 \}$.
This hypersurface is of contact type, because the Liouville vector field $X$ is transverse to $S_1$,
$$
X(F)=\left( \frac{1}{2}(x^2+y^2)+2z^2\right )g'+w^2f'> 0
$$
for points $x,y,z,w$ such that $F(x,y,z,w)=0$, as points with $g'=0$ are precisely those with $w^2=1$, and points with $f'=0$ are those with $x^2+y^2+z^2=1$.
The hypersurface $S_{1}$ is meant to describe the result of the surgery along $S$.
See Figure~\ref{fig:flat_weinstein} for a sketch of the situation.

\begin{figure}[htp]
  \centering
  \includegraphics[width=0.6\textwidth,clip]{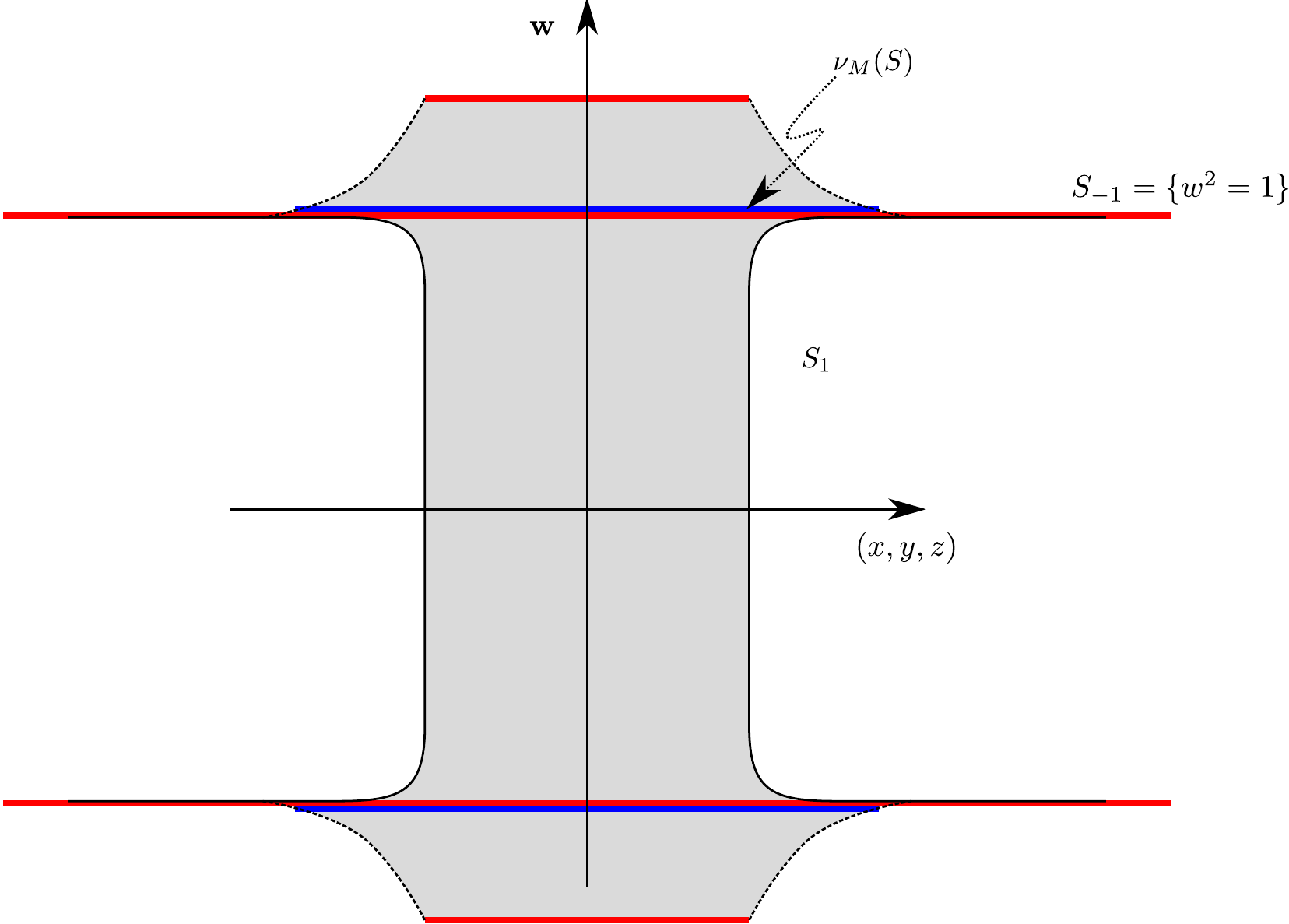}
  
  \caption{A symplectic handle in the flat Weinstein model}\label{fig:flat_weinstein}
\end{figure}

\begin{remark}
\label{rem:profile}
Instead of a profile for a symplectic handle described by the above function $F$, one more commonly chooses a profile of the form
$$
x^2+y^2+z^2-w^2=c.
$$
The advantage is that
$$
G=x^2+y^2+z^2-w^2
$$
defines an $\omega$--convex Morse function with respect to the Liouville field $X$ with one critical point on the handle.
The main reason for preferring $F$ is that it simplifies later computations.
Note that topologically the two profiles are the same.
Furthermore, one can adapt the $\omega$--convex function $G$ to the above profile as well.
See the summary in Proposition~\ref{prop:handle_attachment}.
\end{remark}

In order to describe the surgery, we shall use handle attachment along a symplectic manifold $(W,\omega)$ with contact type boundary $M$.

Define the symplectic handle $(H_{k+1},\omega_0)$ as follows.
$H_{k+1}$ consists of those points $p \in (\R^{2n},\omega_0)$ such that
\begin{itemize}
\item There is a $t\in [0,1]$ such that the time-$t$ flow of $X$ satisfies $Fl^X_t(p) \in \Phi_C(\nu(S))$. This is the gluing part of the symplectic handle.
\item There are $t_1\leq 0$ such that $Fl^X_{t_1}(p) \in \Phi_C(\nu(S))$ and $t_2\geq 0$ such that $Fl^X_t(p) \in S_1$.
\end{itemize}

Let us now attach this symplectic handle $H_{k+1}$ to $(W,\omega)$.
A neighborhood of the boundary of $(W,\omega)$ is symplectomorphic to $([-1,0]\times M,d(e^t,\alpha)$; call this symplectomorphism $\psi_\partial:\nu_W(M)\to [-1,0]\times M$ (note that we can attach a piece of a symplectization of $M$ to $W$ to ensure we have such a neighborhood).
In particular, we have a symplectomorphism
\begin{eqnarray*}
\psi_\partial: \nu_W(\nu_M(S) ) & \longrightarrow & [-1,0]\times \nu_M(S).
\end{eqnarray*}
We can compose this symplectomorphism with the map
\begin{eqnarray*}
\tilde \Phi_C: [-1,0]\times \nu_M(S) & \longrightarrow & H_1 \\
(t,p) & \longmapsto & Fl^X_t( \Phi_C(p) ). 
\end{eqnarray*}
This map is also a symplectomorphism, cf.~Lemma~\ref{lemma:construct_symplectomorphism} (or rescale the symplectic form on $H_{k+1}$).

Now attach the symplectic handle
$$
\tilde W:=W \cup H_{k+1}/\sim.
$$
Here we glue $x$ in $\nu_W(\nu_M(S) ) \subset W$ to $y$ in $H_1$ if and only if $\tilde \Phi_C \circ \psi_\partial(x)=y$.
By Lemma~\ref{lemma:construct_symplectomorphism} the resulting manifold $\tilde W$ is again symplectic and its boundary is a contact manifold that is diffeomorphic to the surgered manifold $\widetilde {(M,\xi)}_{S,\epsilon}$, obtained by performing surgery on $M$ along the isotropic submanifold $S$ with framing $\epsilon$.

\begin{definition}
The above attaching procedure is called {\bf symplectic handle attachment} along $S$ at the convex end of $W$.
We call the attachment {\bf subcritical} if $\dim S<n$ and {\bf critical} if $\dim S=n$. 
The induced operation on the convex end is called {\bf contact surgery} along $S$.
The contact surgery is called {\bf subcritical} if $\dim S<n$ and {\bf critical} or {\bf Legendrian} if $\dim S=n$. 
\end{definition}

\begin{remark}
\label{rem:extending_convexity}
Since we attach a symplectic handle to a cobordism by gluing flow lines of the respective Liouville fields, we see that we can extend the Liouville field defined in a neighborhood of the convex end of $W$ to the new symplectic manifold $(\tilde W,\tilde \omega)$.

As alluded to in Remark~\ref{rem:profile}, there is a function on a symplectic handle that is $\omega_0$--convex for the Liouville field $X$.
By slightly modifying the $\omega$--convex function $f$ on $W$, we can glue this function to the one on the symplectic handle.
This is sketched in Figure~\ref{fig:omega_convex}

\end{remark}

\begin{figure}[htp]
  \centering
  \includegraphics[width=0.6\textwidth,clip]{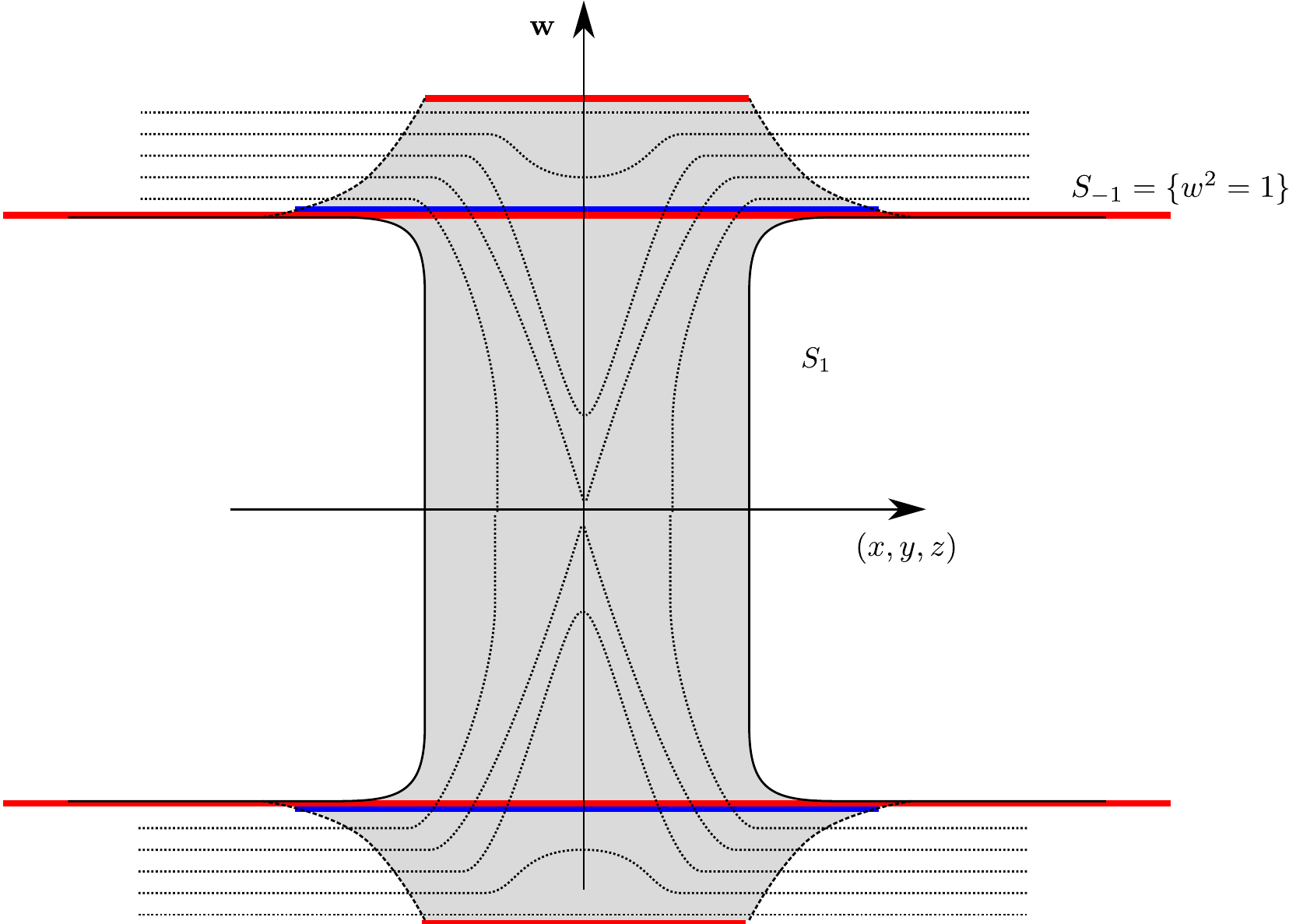}
  
  \caption{A sketch of the modifications to glue $\omega$--convex functions }\label{fig:omega_convex}
\end{figure}

Let us summarize the above discussion in the following proposition.
\begin{proposition}
\label{prop:handle_attachment}
Let $(W,\omega)$ be a symplectic cobordism.
Suppose that $i:S\to \partial W$ is an embedded isotropic $k$--sphere in the convex end of $W$ whose conformal symplectic normal bundle is trivialized by $\epsilon$.

Then we can attach a handle $H_1$ to $W$ along $S$ with framing $\epsilon$ to obtain a symplectic cobordism $(\tilde W,\tilde \omega)$.

Furthermore, if $(W,\omega)$ admits an $\omega$--convex function $f$, then $f$ can be extended to an $\tilde \omega$--convex function $\tilde f$ on $\tilde W$ such that $\tilde f$ has only one additional critical point. 
\end{proposition}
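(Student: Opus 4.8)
The plan is to assemble the pieces already laid out in the preceding subsection, rather than to prove anything genuinely new. For the first assertion, I would recall that, after attaching a piece of the symplectization of $M$ to $W$ if necessary, a neighborhood $\nu_W(\nu_M(S))$ of the attaching region is symplectomorphic via $\psi_\partial$ to $[-1,0]\times\nu_M(S)$, and then via $\tilde\Phi_C$ to (the gluing part of) the symplectic handle $H_{k+1}\subset(\R^{2n},\omega_0)$; the chosen framing $\epsilon$ enters through the contactomorphism $\psi$ of Section~\ref{sec:flat_weinstein}, and one takes $C$ large enough that $\Phi_C(\nu(S))$ has size bigger than $1$, so that $H_{k+1}$ is well-defined. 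I would then invoke Lemma~\ref{lemma:construct_symplectomorphism} to see that the gluing map $\tilde\Phi_C\circ\psi_\partial$ is a symplectomorphism, hence that $\omega$ and $\omega_0$ patch to a symplectic form $\tilde\omega$ on $\tilde W=W\cup H_{k+1}/\!\sim$. Finally I would check that the new boundary --- the part of $\partial W$ outside the attaching region together with the portion of $S_1$ bounding $H_{k+1}$ --- is of contact type, using transversality of $X$ to $S_1$ (the computation $X(F)>0$ on $\{F=0\}$ carried out above) together with transversality of $X$ to $\partial W$ away from the handle, and that it is diffeomorphic to the surgered manifold $\widetilde{(M,\xi)}_{S,\epsilon}$ by construction.

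For the second assertion I would start from an $\omega$-convex Morse function $f$ on $W$ with gradient-like Liouville field $Y$, and use the function $G(x,y,z,w)=x^2+y^2+z^2-w^2$ associated to the handle, as in Remark~\ref{rem:profile}: one checks that $X(G)=(x^2+y^2)+4z^2+2w^2>0$ away from the origin, so $G$ is Morse with a single critical point, lying on the core of $H_{k+1}$. By Remark~\ref{rem:extending_convexity} the Liouville field $Y$ near the convex end of $W$ and the field $X$ on $H_{k+1}$ already glue to a Liouville field on $\tilde W$, since the handle is attached along flow lines; the remaining task is to modify $f$ near $\partial W$ so that it becomes compatible with a suitable increasing reparametrization of $G$ along the whole gluing region, after which the two functions glue to a single $\tilde\omega$-convex function $\tilde f$ on $\tilde W$. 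Because $\tilde f$ agrees with $f$ off the handle and with the reparametrized $G$ on the handle, it acquires exactly one new critical point, namely the one coming from $G$; this is the situation sketched in Figure~\ref{fig:omega_convex}.

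I expect the main obstacle to be exactly this interpolation in the second part: one must modify $f$ on a collar of the convex boundary of $W$ so that it matches the handle function along the entire overlap $\nu_W(\nu_M(S))$, while never destroying the gradient-like condition and introducing no spurious critical points. The cleanest way to organize this seems to be to pass to collar coordinates in which, transported through the explicit maps $\psi$, $\phi_C$, $\psi_W$ and the flow of $X$, both $f$ (as a reparametrization of $e^t$) and $G$ become functions of the common Liouville flow parameter; the interpolation then reduces to a one-variable statement about increasing functions, and the gradient-like hypothesis is automatically preserved because one only reparametrizes along Liouville flow lines.
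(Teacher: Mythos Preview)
Your proposal is correct and is precisely what the paper does: the proposition is stated in the paper as a summary (``Let us summarize the above discussion in the following proposition''), with no separate proof, and the argument is exactly the assembly of the construction in Section~\ref{sec:flat_weinstein} together with Remarks~\ref{rem:profile} and~\ref{rem:extending_convexity} and Figure~\ref{fig:omega_convex}. Your identification of the interpolation of $f$ with (a reparametrization of) $G$ along the collar as the only genuine technical point, and your reduction of it to a one-variable statement along Liouville flow lines, matches the paper's sketch.
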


\begin{remark}
We see that we can attach symplectic handles under rather mild assumptions to the convex end of a symplectic manifold.
The converse, i.e.~attaching handles to the concave end of a symplectic manifold is much more restrictive.
Indeed, there are many examples of non-fillable contact manifolds, which illustrates that concave handle attachment has additional requirements.
\end{remark}

\subsection{Symplectic handle cancellation}
\label{sec:symplectic_handle_cancellation}
The main technical tool we shall use is Lemma~3.6b from \cite{Eliashberg:psh}.
Here is a formulation that is suitable for our purposes.
\begin{lemma}[Eliashberg]
Let $(W,\omega)$ be a symplectic manifold and $f$ be an $\omega$--convex function.
 
Let $p$ and $q$ be non-degenerate critical points of $f$ and $d\in ] f(p),f(q)[$
Suppose that
\begin{itemize}
\item $\ind_q(f)=\ind_p(f)+1$.
\item The sphere $S^-_q$, obtained by intersecting the stable manifold $W^s(q)$ with the level set $\{ x~|~f(x)=d \}$, intersects the sphere $S^+_p$, formed by intersecting the unstable manifold $W^u(p)$ with $\{ x~|~f(x)=d \}$, transversely in one point.
\end{itemize}
Then the critical points can be cancelled by a $J$--convex deformation of $f$ in a neighborhood of $[f(p),f(q)]$. 
\end{lemma}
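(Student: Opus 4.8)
The plan is to run the classical Morse-theoretic cancellation argument --- Smale's ``first cancellation theorem'', as presented in Milnor's \emph{Lectures on the $h$-cobordism theorem} --- but to carry it out entirely within the class of $J$-convex (equivalently $\omega$-convex) functions; in this form it is exactly Lemma~3.6b of \cite{Eliashberg:psh}. The two hypotheses are tailored to this. First I would fix a complete gradient-like Liouville field $X$ for $f$ and note that transversality of $S^-_q$ with $S^+_p$ at a single point of $\{f=d\}$ is equivalent, by the flow, to saying that $W^s(q)$ and $W^u(p)$ meet transversally along a single $X$-trajectory $\gamma$; this $\gamma$ runs from $p$ up to $q$, and $\overline\gamma:=\gamma\cup\{p,q\}$ is an embedded arc. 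One then chooses a tubular neighbourhood $N$ of $\overline\gamma$, diffeomorphic to a ball, containing no critical point of $f$ besides $p$ and $q$, and small enough that $N\subset f^{-1}(I)$ for a short closed interval $I$ around $[f(p),f(q)]$. The whole deformation will be supported in $N$, hence in $f^{-1}(I)$, leaving $f$, all its other critical points, and the ends of $W$ untouched.

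The technical core is the second step: putting $f$ into a $J$-convex normal form on $N$. Here I would invoke Eliashberg's normal-form results for strictly plurisubharmonic functions near such connecting data (strict plurisubharmonicity being an open, $C^2$-stable and flexible condition): after a $J$-convex deformation of $f$ supported in $N$ one may assume that, in suitable holomorphic coordinates $(z_1,\dots,z_n)$ on $N\subset\C^n$, $f$ agrees with an explicit strictly plurisubharmonic model $f_0$. The model splits off a ``longitudinal'' coordinate --- the real axis $\{y_1=0,\ z_2=\dots=z_n=0\}$ is $\gamma$ --- and has the form $f_0 = \phi(x_1)+C y_1^2+\rho(z_2,\dots,z_n)$, where $\phi$ is a real double-well profile with a nondegenerate minimum at $p$ (height $f(p)$) and a nondegenerate maximum at $q$ (height $f(q)$), $C$ is a large constant, and $\rho$ is a strictly plurisubharmonic function on $\C^{n-1}$ with a single critical point of Morse index $k:=\ind_p(f)$. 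The longitudinal minimum/maximum then supplies the one extra descending direction at $q$, matching $\ind_q(f)=k+1$. That such a $\rho$ exists uses precisely the bound $\ind_q(f)=k+1\le n$ forced by $J$-convexity, i.e.\ $k\le n-1=\dim_{\C}\C^{n-1}$; and strict plurisubharmonicity of $f_0$, even where $\phi''<0$, is guaranteed by taking $C$ large, since the Levi form is block-diagonal with $(z_1,\bar z_1)$-entry $\frac14\phi''+\frac12 C$.

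The third step is the cancellation inside the model. I would replace the double-well $\phi$ by a monotone one-parameter family $\phi_t$ --- the plurisubharmonic ``birth--death'' modification, on the prototype $x_1^3-3tx_1$ (two nondegenerate critical points for $t>0$, none for $t<0$), cut off so that $\phi_t=\phi_0$ near the ends of the $x_1$-range, hence $f_0$ is unchanged near $\partial N$. Because $\phi_t''$ stays uniformly bounded along the family, choosing $C$ large enough keeps $\phi_t(x_1)+C y_1^2+\rho$ strictly plurisubharmonic for every $t$. This yields a $J$-convex path from $f_0$ to a strictly plurisubharmonic function $f_1$ on $N$ with no critical points in $N$ and equal to $f_0$ near $\partial N$; concatenated with the normal-form deformation of the first stage and extended by the constant $f$ outside $N$, it gives the desired $J$-convex deformation of $f$, supported in $f^{-1}(I)$, in which $p$ and $q$ have been cancelled and nothing else has changed. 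Alternatively, one simply quotes \cite{Eliashberg:psh}, Lemma~3.6b, of which the statement above is the special case we need.

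I expect the main obstacle to be the second step: the $J$-convex normal form, together with the construction of the explicit strictly plurisubharmonic model $f_0$ and of the strictly plurisubharmonic family realising the cancellation. In the smooth category one may simply slide the two critical points together along $\gamma$ using an almost arbitrary modification of the gradient-like field, but here every intermediate function must remain plurisubharmonic, and the borderline case $\ind_q(f)=n$ is genuinely delicate --- this is the content of Eliashberg's Lemmas~3.5--3.6 in \cite{Eliashberg:psh}, which one either reconstructs with explicit psh models or cites directly. The remaining ingredients --- the connecting-trajectory count, the tubular-neighbourhood bookkeeping, and the final gluing --- are routine.
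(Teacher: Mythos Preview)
The paper does not prove this lemma at all: it is stated as a quotation of Eliashberg's Lemma~3.6b in \cite{Eliashberg:psh}, with the further remark that a closely related statement with proof appears as Proposition~10.9 in \cite{Cieliebak:Symplectic_geometry_Stein}. So there is nothing to compare your argument against --- the paper treats this as a black box imported from the literature, exactly as you suggest in your final sentence.

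Your sketch is a reasonable outline of how the Eliashberg/Cieliebak--Eliashberg argument actually runs (normal form near the connecting trajectory, explicit psh model with a double-well profile in the longitudinal direction, then a birth--death family $\phi_t$ kept plurisubharmonic by a large quadratic correction), and you correctly identify the $J$-convex normal form as the nontrivial step. For the purposes of this paper, though, the citation suffices; if you want to include a proof sketch, what you have written is adequate, but you should be aware that the delicate points you flag (especially the borderline case $\ind_q(f)=n$ and the construction of the psh model $\rho$ on $\C^{n-1}$) are exactly where the work in \cite{Eliashberg:psh} lies, and your sketch does not add anything beyond pointing back to those lemmas.
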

A very similar statement with proof can also be found in~\cite{Cieliebak:Symplectic_geometry_Stein}, Proposition~10.9.

Given the lemma, we can perform symplectic handle cancellation in a way similar to the one in the smooth case, see \cite{Milnor:h-cobordism}, Theorem~5.6.
We shall briefly describe the particular setup which we shall use.
This will be the simplest case of handle cancellation: it can occur after consecutive attachment of handles with index difference~$1$.

Let $(W^{2n}_1,\omega)$ be a symplectic manifold such that $M_1\subset \partial W_1$ is a convex end.
Choose an $\omega$--convex function $f_1$ near the convex end and let $X_1$ be the associated Liouville vector field.

Now suppose that $S_1\subset M_1$ is an isotropic $(n-2)$--sphere with a trivialization $\epsilon$ of its conformal symplectic normal bundle.
Suppose furthermore that $S_1$ bounds a Legendrian $(n-1)$--disk $D_1$ in $M_1$.
Now form the symplectic manifold $(W_2,\omega_2)$ by attaching a symplectic $(n-1)$-handle along $S_1$,
$$
(W_2,\omega_2)=W_1\cup_{S_1,\epsilon}H_{n-1}.
$$
The $\omega_1$--convex $f_1$ can be extended to an $\omega_2$--convex function $f_2$ as mentioned in Remark~\ref{rem:extending_convexity}: this new $\omega_2$--convex function has one additional critical point, corresponding to the middle of the handle.
We shall denote this critical point by $p$.

Note that the convex end $M_1$ is surgered into a new convex end $M_2\subset \partial W_2$.
This convex end comes with a Legendrian $(n-1)$--sphere $S_2$ which is formed as follows.

First observe that there is a parallel copy $D_2$ of the core of $H_{n-1}$ which is a Legendrian $(n-1)$--disk.
More explicitly, we can think of $H_{n-1}$ as $D^2\times T^*S^{n-1}$.
Then put
\begin{eqnarray*}
\phi:~D^{n-1} & \longrightarrow & H_{n-1} =D^2\times T^*D^{n-1} \\
w & \longmapsto & (h_1(|w|)x_0,0,h_2(|w|)w).
\end{eqnarray*}
Here $x_0=(1,0)\in D^2$, and $h_1$ and $h_2$ are functions parametrizing the profile $S_1$.
We see directly that $\alpha=1/2(x\, dy-y\, dx)+2z\, dw+w\, dz$ restricts to $0$ on $D_2:=\phi(D^{n-1})$.
After this, we can match $D_1$ (which is partially removed after the handle attachment of $H_{n-1}$) to glue to $D_2$.
This gives the Legendrian sphere $S_2$.

\begin{remark}
To visualize the handle cancellation that is going to occur in the next step, observe that $S_2$ intersects the belt sphere of $H_{n-1}$ transversely in one point, namely in $\phi(0)$.
\end{remark}

Since $S_2$ is Legendrian, the conformal symplectic normal bundle is trivial, so we can form $W_3$ by critical $n$--handle attachment along $S_2$ without reference to a framing,
$$
(W_3,\omega_3):=W_2 \cup_{S_2} H_{n}.
$$
As before, we can extend the $\omega_2$--convex function $f_2$ to an $\omega_3$--convex function $f$ on $W_3$.
Denote the additional critical point of $f_3$ by $q$.
We shall denote the gradient-like Liouville vector field on $W_3$ by $X_3$.
The convex end $M_2$ is surgered yielding the contact manifold $M_3$.

Now intersect a level set $\{ f_3=d \}$, with $d$ between $f_3(p)$ and $f_3(q)$, with the stable manifold $W^s(q)$ and the unstable manifold $W^u(p)$ to form the spheres $S^-(q)$ and $S^+(p)$, respectively.
These spheres intersect transversely in one point, as we can see from the unique flow line of the Liouville vector field $X_3$ from $p$ to $q$. 

This means that Lemma~3.9 applies, so we can deform $f_3$ to another $\omega_3$--convex function $g_3$ such that $g_3(x)=f_3(x)$ on sublevel sets $\{ f_3<c=f(p)-\delta \}$.
In particular, on such sublevel sets $g_3$ coincides with $f_1$.
Furthermore, $g_3$ has no critical points whenever $g_3(x)\geq c$.
This means that $\{ g_3(x)\geq c \}$ looks like a symplectization, so we conclude that the completion of $W_1$, i.e.~the manifold obtained from $W_1$ by attaching the positive end of a symplectization, is symplectomorphic to the completion of $W_3$.

  \begin{figure}[htp]
    \centering
    \includegraphics[width=0.4\textwidth,clip]{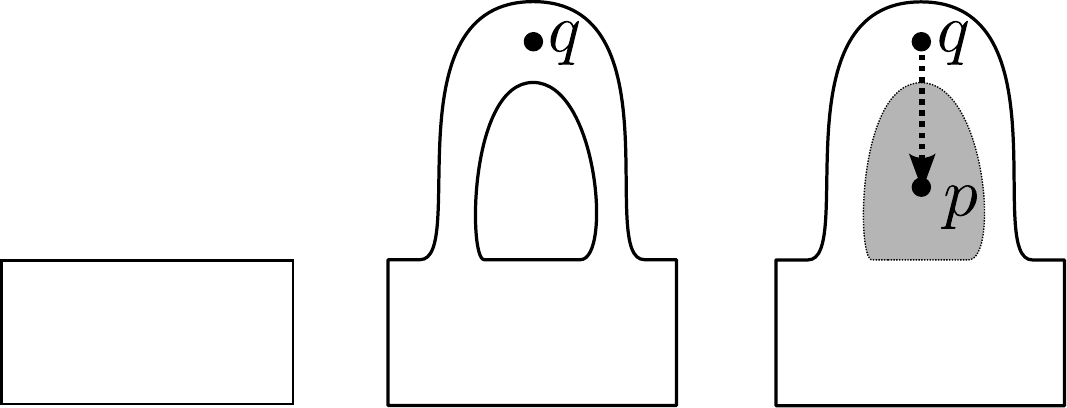}%
    \caption{Cancellation of symplectic handles: the creation of a unique flow line between the critical points $q$ and $p$}\label{fig:cancellation}
  \end{figure}

We summarize the conclusion in the following lemma.
\begin{lemma}[Handle cancellation in successive handle attachment]
\label{lemma:handle_cancel}
Let $(W_1,\omega_1)$ and $(W_3,\omega_3)$ be the symplectic manifolds as formed above by successive handle attachment.
Then the completion of $(W_1,\omega_1)$ is symplectomorphic to the completion of $(W_3,\omega_3)$.
In particular, $M_1$ is contactomorphic to $M_3$.
\end{lemma}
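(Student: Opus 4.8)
The plan is to run the classical handle-cancellation argument, but entirely in the Weinstein category, using the Eliashberg lemma stated above as the key analytic input and reading the required transversality off the explicit geometry of the two handles. In fact the construction preceding the statement already carries this out, so I would simply organize it into a proof.

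First I would recall the data produced above: the symplectic manifold $W_1$ with convex end $M_1$, an $\omega_1$--convex Morse function $f_1$ near $M_1$ with gradient-like Liouville field $X_1$; the manifold $W_2 = W_1\cup_{S_1,\epsilon}H_{n-1}$ obtained by attaching the subcritical $(n-1)$--handle along the isotropic sphere $S_1\subset M_1$, together with an $\omega_2$--convex extension $f_2$ of $f_1$ having a single new critical point $p$ of index $n-1$ (Proposition~\ref{prop:handle_attachment} and Remark~\ref{rem:extending_convexity}); and the manifold $W_3 = W_2\cup_{S_2}H_n$ obtained by attaching the critical $n$--handle along the Legendrian sphere $S_2\subset M_2$, with an $\omega_3$--convex extension $f_3$ of $f_2$, one new critical point $q$ of index $n$, and gradient-like Liouville field $X_3$ extending $X_1$. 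We may take $f_3(p)<f_3(q)$.

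The crux is to verify the hypothesis of the Eliashberg lemma for the pair $p,q$: for a regular value $d\in\,]f_3(p),f_3(q)[$ the descending sphere $S^-(q)=W^s(q)\cap\{f_3=d\}$ and the ascending sphere $S^+(p)=W^u(p)\cap\{f_3=d\}$ must meet transversely in exactly one point. Here the explicit model is essential. Intersected with $M_2$, the ascending manifold $W^u(p)$ of the index-$(n-1)$ point is precisely the belt sphere of $H_{n-1}$; by construction $S_2$ was assembled from a parallel copy $D_2$ of the core of $H_{n-1}$ glued to the Legendrian disk $D_1\subset M_1$, and $D_2$ meets this belt sphere transversely in the single point $\phi(0)$. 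Since $S_2$ is the attaching sphere of $H_n$, the descending manifold $W^s(q)$ is swept out by the $X_3$--flow lines limiting onto $S_2$; flowing down from the level $\{f_3=d\}$ one sees that $W^s(q)\cap W^u(p)$ is a single flow line from $p$ to $q$, and that the corresponding intersection of $S^-(q)$ with $S^+(p)$ is transverse (and no second flow line can connect $p$ to $q$, again because $S_2$ meets the belt sphere in only one point). I expect this bookkeeping with handle coordinates --- making precise the identification of the ascending sphere with the belt sphere, and checking that the single geometric intersection survives the gluing maps $\tilde\Phi_C\circ\psi_\partial$ --- to be the main obstacle; the rest is formal.

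Granting the transversality, the Eliashberg handle-cancellation lemma yields an $\omega_3$--convex function $g_3$, obtained from $f_3$ by a deformation supported near $\{f_3\in[f_3(p),f_3(q)]\}$, such that $g_3$ has no critical points on $\{g_3\ge c\}$ for $c=f_3(p)-\delta$ and $g_3=f_3=f_1$ on $\{f_3<c\}$. Since $g_3$ is proper and critical-point-free above $c$ and carries a complete gradient-like Liouville field, $\{g_3\ge c\}$ is a piece of the symplectization of the contact level set $\{g_3=c\}$; hence the completion of $W_3$ is symplectomorphic to the completion of $\{g_3\le c\}=\{f_1\le c\}$, which is the completion of $W_1$. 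Finally, a symplectomorphism between these completions is, near infinity, the symplectization of a contactomorphism of the ideal contact boundaries, so $M_1$ is contactomorphic to $M_3$, which is the last assertion.
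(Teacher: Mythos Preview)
Your proposal is correct and follows exactly the paper's own argument: the lemma is stated as a summary of the preceding discussion, and your proof simply organizes that discussion---set up $f_3$ with the two new critical points $p$ and $q$, read off from the explicit handle model that $S_2$ meets the belt sphere of $H_{n-1}$ transversely in one point (hence a unique $X_3$--flow line from $p$ to $q$), apply Eliashberg's cancellation lemma to get $g_3$, and conclude that the completions agree. The only addition is your last sentence explaining why equality of completions gives $M_1\cong M_3$, which the paper leaves implicit.
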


\section{Surgery and open books}

In this section we try to describe some relations between contact
surgery and open books. 
Let us summarize the results that will be proved below.
If an isotropic sphere $S$ lies in
the binding of an open book and if the framing is compatible with the
open book, then subcritical surgery along $S$ can also be described in
terms of handle attachment to the pages of an open book.

On the other hand, critical contact surgery can be regarded as a
change in the monodromy of the open book, if the sphere used for the
surgery lies nicely in a page.

We apply this to show that stabilization of open books leads to
contactomorphic contact manifolds.  The basic strategy is the
following.  To stabilize an open book we attach an $n$--handle to the
$2n$--dimensional page forming a new Lagrangian sphere and change the
monodromy by composing with a right-handed Dehn twist along the newly
formed Lagrangian sphere.

We shall show that the handle attachment to the page can be realized
by a subcritical handle attachment to the convex end of $[-1,0]\times M$ and that the
change in monodromy is realized by a critical handle attachment.  The
latter turns out to cancel the former, so we obtain the same contact
manifold.

\subsection{Subcritical surgery and open books}

Let us first describe the situation for trivial monodromy, since that
situation is more easily visualizable.  Let $\Sigma$ be a compact
Stein manifold with boundary $B := \partial \Sigma$ and consider the
open book
\begin{equation*}
  M := \open (\Sigma,\id) \;.
\end{equation*}
Suppose that $S$ is an isotropic (possibly Legendrian) sphere in $B$
with a trivialization $\epsilon$ of its conformal symplectic normal
bundle.  We can perform contact surgery along $(S,\epsilon)$ giving
rise to a contact manifold $\tilde B$. The associated surgery
cobordism also gives a Stein filling for $\tilde B$, which we will
denote by $\tilde \Sigma$. Alternatively, $\tilde \Sigma$ can be
regarded as the Stein manifold obtained from $\Sigma$ by handle
attachment along $(S,\epsilon)$.

Note that $S$ also gives rise to an isotropic submanifold of $M$.
Indeed, we have an isotropic sphere in the binding: take $S_M :=
S\times \{ 0 \}\subset B\times D^2$.  Since we have the following
contact form near the binding,
\begin{equation*}
  \lambda + x\,dy - y\,dx \;,
\end{equation*}
we see that we also get a trivialization of the conformal symplectic
normal bundle of $S_M\subset M$, given by $\epsilon_M :=
\epsilon\oplus \langle \partial_x, \partial_y\rangle$.  For later use,
it is useful to give the last factor a name,
\begin{equation*}
  \epsilon_{D^2} = \langle \partial_x, \partial_y\rangle \;.
\end{equation*}

Contact surgery on $M$ along $(S_M,\epsilon_M)$ gives the subcritical
fillable contact manifold $\widetilde M:=\partial (\Sigma \times
D^2)$, as we can see from performing handle attachment to the filling
$\Sigma\times D^2$ of $M$.  On the level of open books, we see that
$\widetilde M$ has a supporting open book with page $\widetilde
\Sigma$ and the identity as monodromy.

This setup also describes the general situation, since the surgery
takes place near the binding.  As a result, we have the following
proposition,

\begin{proposition}\label{prop:subcritical}
  Let $\Sigma$ be a compact Stein manifold with boundary $B$ and let
  $\psi$ be a compactly supported symplectomorphism such that
  \begin{equation*}
    M := \open(\Sigma,\psi)
  \end{equation*}
  is a contact open book.  Suppose that $S_B$ is an isotropic
  $(k-1)$--sphere in the binding $B$ with a trivialization $\epsilon$
  of its conformal symplectic normal bundle in $B$.  Then there is a
  corresponding isotropic $(k-1)$--sphere $S_M\subset M$ with
  trivialization $\epsilon\oplus \epsilon_{D^2}$ of its conformal
  symplectic normal bundle such that
  \begin{equation*}
    \open(\Sigma\cup_{S_B} \text{$k$--handle}, \psi \cup_{S_B}\id)
    \cong  \widetilde{\open(\Sigma,\psi)}_{(S_M, \epsilon \oplus
      \epsilon_{D^2})} \;.
  \end{equation*}
\end{proposition}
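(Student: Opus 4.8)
The plan is to reduce the general statement to the special case of trivial monodromy already worked out in the text, by localizing the surgery near the binding. The key observation is that the isotropic sphere $S_B$ lives in the binding $B$, and the surgery along $S_M = S_B \times \{0\}$ takes place entirely within a neighborhood $\nu(B) \cong B \times D^2$ of the binding, on which the monodromy $\psi$ acts as the identity (since $\psi$ is compactly supported, hence the identity near $\partial\Sigma$). So the monodromy plays no role in the local model of the surgery, only in how the surgered neighborhood is glued back to the rest of the mapping torus.

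First I would set up the contact form near the binding of $M = \open(\Sigma,\lambda;\psi)$. Using the construction from Section~2, a neighborhood of $B$ carries the contact form $\beta = h_1(r)\,\lambda|_B + h_2(r)\,(x\,dy - y\,dx)$ (suitably normalized so that near $r=0$ it agrees with $\lambda|_B + \tfrac12(x\,dy-y\,dx)$, up to rescaling), and the Reeb vector field is transverse to the pages. The sphere $S_M = S_B \times \{0\}$ is isotropic with conformal symplectic normal bundle $\epsilon \oplus \epsilon_{D^2}$, exactly as in the displayed discussion preceding the proposition. Then I would invoke Proposition~\ref{prop:handle_attachment}: performing contact surgery along $(S_M, \epsilon \oplus \epsilon_{D^2})$ corresponds to attaching a symplectic $k$--handle to a symplectic filling-type collar, and the effect on any symplectic cobordism filling $M$ near $S_M$ is handle attachment along $S_B$ in the page direction.

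Next I would identify the surgered open book concretely. The page $\Sigma$ becomes $\tilde\Sigma := \Sigma \cup_{S_B} (k\text{--handle})$, which is again a compact Weinstein manifold by Proposition~\ref{prop:handle_attachment}; this is the same construction that produced $\tilde\Sigma$ in the trivial-monodromy discussion above. The monodromy $\psi$ extends by the identity over the attached handle — this is legitimate because the handle is glued along $S_B \subset B = \partial\Sigma$, away from the support of $\psi$ — giving the symplectomorphism $\psi \cup_{S_B} \id$ of $\tilde\Sigma$. Building the contact open book $\open(\tilde\Sigma, \psi \cup_{S_B}\id)$ from these data via the Section~2 construction, one checks that its contact form agrees with the one obtained by performing the contact surgery on $M$: away from $\nu(B)$ the two manifolds are literally identical (the mapping torus of $(\Sigma,\psi)$ is untouched), and inside $\nu(B)$ both are given by the standard subcritical surgery model attached to $B \times D^2$, which is exactly the local computation carried out for $\open(\Sigma,\id)$ above. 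Gluing the contact forms via Lemma~\ref{lemma:construct_symplectomorphism} produces a contactomorphism between the two sides.

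The main obstacle is the bookkeeping of framings and the matching of contact forms on the overlap region $\nu(B) \setminus S_M$: one must verify that the trivialization $\epsilon \oplus \epsilon_{D^2}$ of the conformal symplectic normal bundle of $S_M$ in $M$ is precisely the one induced by the page-handle attachment along $(S_B,\epsilon)$ together with the normal $D^2$--direction, and that the handle model glues compatibly with the functions $h_1, h_2$ defining $\beta$ near the binding. This is the point where the "flat Weinstein model" of Section~\ref{sec:flat_weinstein} is needed, since it gives a surgery model whose contact form differs from $\alpha$ only by a constant, making Lemma~\ref{lemma:construct_symplectomorphism} applicable. Once this local identification is in place, the global contactomorphism is assembled by combining the identity on the mapping torus part with the local contactomorphism near the binding.
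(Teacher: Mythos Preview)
Your proposal is correct and follows essentially the same approach as the paper: the paper's entire argument is the one-line observation ``This setup also describes the general situation, since the surgery takes place near the binding,'' reducing to the trivial-monodromy case worked out in the preceding paragraph. You have spelled out in more detail exactly what that sentence means --- localizing in $\nu(B)\cong B\times D^2$ where $\psi$ acts as the identity, and checking that the surgered neighborhood glues back to the unchanged mapping torus --- but the strategy is identical.
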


In other words, this kind of subcritical surgery is realized by handle
attachment to the page of an open book without changing the monodromy.

\subsection{Critical surgery and open books}

Now consider a contact open book $M:=\open(\Sigma,\psi)$ having a
Lagrangian sphere $L_S$ in a page.  We can assume that $L_S$
represents a Legendrian sphere in $\Sigma\times \R$, or in
other words in the contact open book $M$.

\begin{lemma}\label{lemma:lagsphere_legsphere}
  Let $M^{2n+1}= \open(\Sigma,\psi)$ be a contact manifold of
  dimension greater than $3$.  If $L_S$ is a Lagrangian sphere in the
  page of a contact open book $M^{2n+1}$, then we can isotope the contact structure on $M^{2n+1}$ and find a supporting open book with symplectomorphic page and isotopic monodromy such that $L_S$ becomes Legendrian in $M^{2n+1}$.
\end{lemma}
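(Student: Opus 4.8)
We are given a Lagrangian sphere $L_S$ in a page $\Sigma$ of the open book $\open(\Sigma,\psi)$, and we want to arrange that it sits inside the contact manifold as a Legendrian sphere. Let me recall the structure: a page is $\Sigma$, embedded in the mapping torus $A_{(\Sigma,\psi)}$ with contact form $\lambda + d\phi$ (at angle $\phi = 0$, say). The contact distribution on a page is the kernel of $\lambda|_\Sigma$ restricted to... wait, actually a page is a codimension-1 submanifold with boundary, and the contact form restricted to the page is $\lambda$. For $L_S$ to be Legendrian in $M$, we need $\lambda|_{L_S} = 0$, i.e., $L_S$ should be an *exact* Lagrangian with the primitive $\lambda|_{L_S}$ vanishing identically, not just being exact.

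**The approach.** A Lagrangian sphere $L_S$ in $(\Sigma, d\lambda)$ satisfies $d(\lambda|_{L_S}) = (d\lambda)|_{L_S} = 0$, so $\lambda|_{L_S}$ is a closed 1-form on $L_S \cong S^n$. Since $n \geq 2$ (the dimension hypothesis $\dim M > 3$ means $n \geq 2$), we have $H^1(S^n) = 0$, so $\lambda|_{L_S}$ is exact: $\lambda|_{L_S} = dg$ for some function $g: L_S \to \mathbb{R}$. So I want to think of this plan:

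So, roughly:

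The plan is to modify the primitive $\lambda$ of the symplectic form on the page — and simultaneously the gluing data — so that the restriction of the new primitive to $L_S$ vanishes identically. Concretely, I would proceed in the following steps.

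\begin{itemize}
\item \textbf{Step 1: Reduce to making $\lambda|_{L_S}$ exact-with-vanishing-primitive.} The Lagrangian condition gives that $\lambda|_{L_S}$ is closed on $L_S \cong S^n$; since $\dim M > 3$ forces $n \geq 2$, we have $H^1(S^n;\mathbb{R})=0$, so $\lambda|_{L_S} = dg$ for some smooth $g: L_S \to \mathbb{R}$. Extend $g$ to a compactly supported smooth function $G$ on $\Sigma$ (using a tubular neighborhood of $L_S$ and a cutoff, arranged to be supported away from $\partial\Sigma$ and from the support of $\psi$, possibly after a preliminary isotopy of $L_S$; here I use that $L_S$ is disjoint from $\partial\Sigma$).
\item \textbf{Step 2: Change the primitive.} Replace $\lambda$ by $\lambda' := \lambda - dG$. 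Then $d\lambda' = d\lambda$, so $(\Sigma, \lambda')$ is still a compact Weinstein manifold (the Liouville field changes by a Hamiltonian-type vector field supported away from the ends, so convexity of the ends is untouched), $\psi$ is still a symplectomorphism with $\psi^*d\lambda' = d\lambda'$, and by construction $\lambda'|_{L_S} = \lambda|_{L_S} - dg = 0$. Thus $L_S$ is an honest Legendrian sphere in the page with respect to $\lambda'$.
\item \textbf{Step 3: Identify the resulting contact manifold.} It remains to check that $\open(\Sigma,\lambda';\psi)$ is isotopic (as a contact manifold, via an isotopy of contact structures) to $\open(\Sigma,\lambda;\psi)$. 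Near $\partial\Sigma$ we have $\lambda' = \lambda$, so the binding piece $B_\Sigma$ is unchanged; on the mapping torus, interpolating $\lambda_t := \lambda - t\,dG$ for $t \in [0,1]$ gives a path of contact forms $\alpha_t = \lambda_t + d\phi$ on $A_{(\Sigma,\psi)}$ (this is contact for all $t$ because $\alpha_t \wedge d\alpha_t^n = \alpha \wedge d\alpha^n$ is a volume form, $d\alpha_t = d\alpha$ being independent of $t$), agreeing with $\alpha$ near the boundary and gluing to the fixed $\beta$ on $B_\Sigma$. By Gray stability this path of contact structures is realized by an ambient isotopy, so $\open(\Sigma,\lambda';\psi)$ is contactomorphic to $\open(\Sigma,\lambda;\psi)$, and carries a supporting open book with the same (symplectomorphic, in fact identical) page $\Sigma$ and the same monodromy $\psi$.
\end{itemize}

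\textbf{Main obstacle.} The only real subtlety is bookkeeping in Step 1: one must make sure the cutoff function $G$ can be chosen with support disjoint both from $\partial\Sigma$ and from $\mathrm{supp}(\psi)$, so that $\psi$ remains a symplectomorphism for $\lambda'$ and the boundary behavior is untouched — this may require first isotoping $L_S$ within its Lagrangian isotopy class (or rather, just as a smooth submanifold, since we only need the end result, and the Lagrangian condition is preserved under symplectic isotopy) away from the support of $\psi$; alternatively, since $\psi$ is compactly supported and Weinstein manifolds have convex ends, one uses the conjugation trick $\open(\Sigma,\psi)\cong\open(\Sigma,\phi^{-1}\psi\phi)$ from the "Order and monodromy" subsection to push things apart. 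The rest of the argument — the change of primitive and the Gray-stability identification — is essentially formal, mirroring the Giroux-lemma computation given earlier in the text. Note the hypothesis $\dim M > 3$, equivalently $n \geq 2$, enters exactly once and crucially: it is what makes $\lambda|_{L_S}$ exact rather than merely closed. In dimension $3$ the statement fails in this naive form precisely because $H^1(S^1) \neq 0$.
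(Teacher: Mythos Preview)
Your proof is correct and follows essentially the same route as the paper: both arguments use $H^1(S^n)=0$ (for $n\ge 2$) to write the restriction of the Liouville form as an exact form, subtract a cut-off exact correction from $\lambda$ to obtain a new primitive $\lambda'$ vanishing on $L_S$, and invoke Gray stability for the resulting path of contact structures. The only cosmetic difference is that the paper works in a Weinstein neighborhood $T^*S^n$ and writes $\lambda-\lambda_{\mathrm{can}}=dg$ there, whereas you restrict to $L_S$ itself and then extend $g$; these are equivalent.

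One small correction to your ``main obstacle'': the condition that $\psi$ remain a symplectomorphism for $\lambda'$ is automatic, since $\psi^*d\lambda'=\psi^*d\lambda=d\lambda=d\lambda'$ depends only on $d\lambda$, not on the choice of primitive. The genuine technical point lurking here is rather that for your interpolation $\alpha_t=\lambda_t+d\phi$ to descend to the \emph{fixed} mapping torus $A_{(\Sigma,\psi)}$ one needs $G\circ\psi=G$; otherwise the identification function $h$ must vary with $t$ as $h_t=h+t(G\circ\psi-G)$, and one identifies the resulting family of mapping tori before applying Gray. Either way the conclusion stands, and no disjoint-support manoeuvre is actually required.
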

\begin{proof}
  Suppose $\lambda$ is a primitive of the symplectic form $\omega$ on
  $\tilde \Sigma$.  Then on a Weinstein neighborhood of $L_S$ we can
  find a primitive
  \begin{equation*}
    \lcan =  p\, d q
  \end{equation*}
  of the symplectic form $\omega$, where $( q, p)$ are
  coordinates on $T^*S^n\cong \nu(L_S)$.  Since $\lambda -\lcan$ is
  closed, we can find a function $g$ such that
  \begin{equation*}
    \lambda - \lcan = dg \;,
  \end{equation*}
  because $H^1_{dR}(S^n)=0$ as $n>1$. Now put
  \begin{equation*}
    \widetilde \lambda := \lambda - d(\rho g)\;,
  \end{equation*}
  where $\rho$ is a smooth cut-off function that is $1$ in a
  neighborhood of $L_S$ with support in the Weinstein neighborhood
  $\nu(L_S)$.  
  Note that $d\widetilde \lambda=d\lambda$ is still symplectic.

  On the Lagrangian sphere $L_S$, $\widetilde \lambda$
  vanishes, so $L_S$ lies in the kernel of the contact form $dt +
  \widetilde \lambda$, so it is Legendrian. Furthermore, the associated contact structure is isotopic to the one we started with by Gray stability.
 
\end{proof}

\begin{remark}
  In dimension $3$, every curve in a page is Lagrangian, but to
  realize a curve as a Legendrian one needs to perturb transversely to
  a page.  Hence we cannot directly formulate an analogue to
  Lemma~\ref{lemma:lagsphere_legsphere}.  On the other hand, in
  dimension $3$ one can always find a supporting open book such that a
  Legendrian lies in a page, see \cite{Geiges:contact_geometric_topology}, section~4, for a discussion of the $3$-dimensional situation.
\end{remark}

Given the Lagrangian sphere $L_S$, we get a compactly supported
symplectomorphism $\tau_{L_S}$, a right-handed Dehn twist along $L_S$.
We can now change the monodromy of the contact open book by adding
Dehn twists along $L_S$, but we can also perform contact surgery along
$L_S$.  We shall now show that these operations coincide.

\subsubsection{Surgery and monodromy}

The goal of this section is to provide a proof of the following folk theorem. This result is well known in dimension $3$, \cite{Geiges:contact_geometric_topology}.

\begin{theorem}\label{thm:Dehntwist_legendrian_surgery}
  Let $\open(\Sigma,\psi)$ be a contact open book with Legendrian
  sphere $L_S$, that restricts to a Lagrangian sphere in $\Sigma$.
  Denote the contact manifold obtained from $\open(\Sigma,\psi)$ by
  Legendrian surgery along $L_S$ by
  $\widetilde{\open(\Sigma,\psi)}_{L_S}$.  Then the contact manifolds
  \begin{equation*}
    \open(\Sigma,\psi\circ \tau_{L_S}) \cong
    \widetilde{\open(\Sigma,\psi)}_{L_S}
  \end{equation*}
  are contactomorphic.
\end{theorem}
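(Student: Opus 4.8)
The plan is to realize \emph{both} sides of the claimed contactomorphism by a single symplectic cobordism built from two handle attachments to $[-1,0]\times M$, where $M = \open(\Sigma,\psi)$, and then invoke symplectic handle cancellation (Lemma~\ref{lemma:handle_cancel}). The strategy mimics the stabilization outline in the introduction. First, I would observe that the Legendrian sphere $L_S\subset M$, being Lagrangian in a page and Legendrian in the contact manifold, can be pushed slightly into the binding direction: more precisely, $\partial L_S$ is not literally relevant since $L_S$ is closed, but the boundary of a small disk neighborhood of $L_S$ in the page gives an isotropic $(n-2)$-sphere $S_1$ in the binding $B$, bounding the Legendrian $(n-1)$-disk $D_1$ which is (a parallel copy of) $L_S$ itself. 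This is exactly the setup of Section~\ref{sec:symplectic_handle_cancellation}: we are in the situation of an isotropic $(n-2)$-sphere $S_1$ in a convex end bounding a Legendrian disk $D_1$.

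Next I would carry out the two handle attachments explicitly. Attach first a subcritical $(n-1)$-handle $H_{n-1}$ along $(S_1,\epsilon)$ in the binding, with $\epsilon$ the framing compatible with the open book. By Proposition~\ref{prop:subcritical}, the convex end after this attachment is again a contact open book, namely $\open(\Sigma\cup_{S_1}\text{$(n-1)$-handle},\psi\cup\id) = \open(\tilde\Sigma,\psi)$, where $\tilde\Sigma$ is the stabilized page: the handle attachment has created in $\tilde\Sigma$ the Lagrangian sphere $L_S$ (the original Lagrangian disk capped by the core of the handle). Then attach a critical $n$-handle $H_n$ along the resulting Legendrian sphere $S_2$ (obtained by matching $D_1$ to the parallel copy $D_2$ of the handle core, exactly as in Section~\ref{sec:symplectic_handle_cancellation}). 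The key claim is that performing this critical attachment has two simultaneous interpretations: on the one hand it is Legendrian surgery on $\open(\tilde\Sigma,\psi)$ along the Legendrian sphere that is isotopic to $L_S$, and on the other hand --- and this is the content that links surgery to monodromy --- attaching a critical handle along a Legendrian sphere sitting in a page realizes the composition of the monodromy with a right-handed Dehn twist along that Lagrangian sphere, i.e.\ the convex end of $W_3$ is $\open(\tilde\Sigma,\psi\circ\tau_{L_S})$. Modulo the standard fact that a right-handed Dehn twist equals the monodromy of the Lefschetz-type local model for a critical handle (equivalently, the geodesic-flow description of $\tau$ given before Figure~\ref{fig:Dehn_twist} matches the return map of the Liouville flow on $S_1$), this identification is a local computation in the flat Weinstein model of Section~\ref{sec:flat_weinstein}.

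Finally, I would apply Lemma~\ref{lemma:handle_cancel}: the indices of the two handles differ by one ($n-1$ and $n$), and the Remark preceding that lemma already notes that $S_2$ meets the belt sphere of $H_{n-1}$ transversely in the single point $\phi(0)$, which is precisely the hypothesis guaranteeing a unique Liouville flow line from the lower critical point $p$ to the upper one $q$. Hence $W_3$ has completion symplectomorphic to the completion of $[-1,0]\times M$, so its convex end $M_3$ is contactomorphic to $M = \open(\Sigma,\psi)$ --- wait, that is not quite what we want; rather, the cancellation shows the convex end after \emph{both} handles equals the convex end after \emph{neither}, so $\open(\tilde\Sigma,\psi\circ\tau_{L_S})\cong M_3\cong \widetilde{\open(\Sigma,\psi)}_{L_S}$ only if we are careful about which intermediate manifold is $W_1$. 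The clean way to state it: take $W_1 = [-1,0]\times M$ with $M=\open(\Sigma,\psi)$; attaching $H_{n-1}$ then $H_n$ yields $W_3$ whose convex end is \emph{both} $\open(\Sigma,\psi\circ\tau_{L_S})$ (reading the two handles together as stabilization, using that the subcritical handle builds $\tilde\Sigma$ from $\Sigma$ but the critical handle's Dehn twist, together with the cancellation bringing the page back to $\Sigma$, produces $\tau_{L_S}$ on $\Sigma$) and $\widetilde{\open(\Sigma,\psi)}_{L_S}$ (reading only the critical handle $H_n$ as Legendrian surgery, after absorbing $H_{n-1}$). The main obstacle I anticipate is precisely pinning down this last bookkeeping: one must verify that after cancelling the pair $(H_{n-1},H_n)$ the effective operation on the original page $\Sigma$ is exactly a single right-handed Dehn twist along $L_S$ and not, say, its inverse or a power --- this requires checking the sign/handedness conventions (right-handed Dehn twist, positive stabilization, the Liouville field $X = \tfrac12(x\partial_x+y\partial_y)+2z\partial_z - w\partial_w$ with its asymmetric coefficients) are mutually consistent, which is the one genuinely delicate point; everything else is an application of Propositions~\ref{prop:subcritical} and \ref{prop:handle_attachment} and Lemma~\ref{lemma:handle_cancel}.
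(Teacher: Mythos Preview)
Your proposal has a genuine gap, and in fact two intertwined problems.

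First, the geometric setup is confused. In Theorem~\ref{thm:Dehntwist_legendrian_surgery} the object $L_S$ is a \emph{closed} Lagrangian sphere lying in the interior of a page $\Sigma$; it does not touch the binding. There is no natural isotropic $(n-2)$-sphere $S_1$ in the binding bounding a Legendrian disk ``which is (a parallel copy of) $L_S$ itself''. You are importing the setup of the stabilization proposition (Lagrangian disk with Legendrian boundary in $\partial\Sigma$), but that is a different hypothesis. Without such an $S_1$ there is no subcritical handle to attach and no cancellation pair to form, so the whole architecture of your argument collapses.

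Second, and more seriously, the argument is circular. Your ``key claim'' that attaching a critical handle along a Legendrian sphere sitting in a page changes the monodromy by a right-handed Dehn twist \emph{is} Theorem~\ref{thm:Dehntwist_legendrian_surgery}. You acknowledge this when you write ``modulo the standard fact that a right-handed Dehn twist equals the monodromy of the Lefschetz-type local model for a critical handle\ldots this identification is a local computation in the flat Weinstein model''. That local computation is precisely the content of the theorem, and it is not a triviality: the paper devotes the entire Step~2 of its proof to carrying it out. The paper's argument is the opposite of yours in logical order: it first proves Theorem~\ref{thm:Dehntwist_legendrian_surgery} by a direct computation in the flat Weinstein model (showing the surgered manifold still carries an open book with the same page, then computing the Reeb-flow return map explicitly on $S_1=\{F=0\}$ and recognizing it as $\sigma_{\tilde g(|p|)}$ via the rational parametrization of the circle), and only \emph{afterwards} uses this theorem, together with Proposition~\ref{prop:subcritical} and Lemma~\ref{lemma:handle_cancel}, to deduce the stabilization result. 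Your own confusion near the end (``wait, that is not quite what we want'') is a symptom of this reversal: handle cancellation tells you $M_3\cong M_1$, which is the stabilization statement, not the surgery-equals-Dehn-twist statement.
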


\begin{proof}
  The proof has two steps.  In the first step we shall show that
  Legendrian surgery on $\open(\Sigma,\psi)$ along $L_S$ yields a
  contact manifold with a supporting open book $(B,\tilde \theta_r)$,
  where $B$ is the binding and $\tilde \theta_r$ the map to $S^1$.
  The new supporting open book has the same page as the supporting
  open book before the surgery, and we can localize the monodromy.  In
  the second step we determine how the monodromy is changed.

  \noindent
  \textbf{Step 1: Supporting open book after surgery}\\
  The contact open book $\open(\Sigma,\psi)$ gives rise to a contact manifold $(M,\xi)$ with a supporting open book $(B,\theta)$, where $B$ is a codimension $2$ submanifold of $M$ and $\theta:\, M-B \to S^1$ a fiber bundle over $S^1$.
  We think of $L_S$ as a Legendrian submanifold of $(M,\xi)$ lying in the page $0\in \R/ \Z\cong S^1$.
  Choose a neighborhood $\nu(L_S)$ of $L_S$ such that $\nu(L_S)$ is contactomorphic to $T^*L_S \times \R$, and such that $\nu(L_S)\subset \theta^{-1}(]-\epsilon,\epsilon[)$.
  In particular, we can restrict the map $\theta$ to a map $\theta_r:=\theta|_{\nu(L_S)}:\, \nu(L_S)\to ]-\epsilon,\epsilon[$.

  Note that $\nu(L_S)$ can be identified with a neighborhood of $\{
   z=0 \}$ in the hypersurface of contact type $\{  w^2=1 \}
  \subset \C^n$, as described before in the interlude on the ``flat''
  Weinstein model.  Let us use the identification to choose a specific
  model for $\theta_r$.

  \begin{figure}[htp]
    \centering
    
    \includegraphics[width=0.6\textwidth,clip]{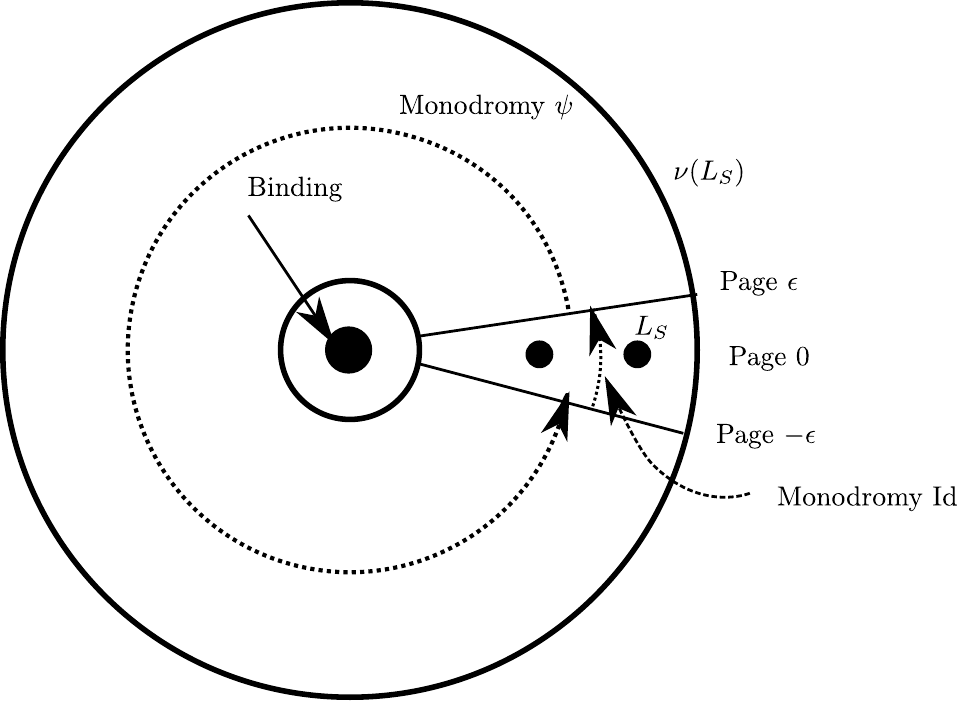}%
    
    \caption{Pages around a Legendrian sphere}
    \label{fig:Legendrian_in_pages}
  \end{figure}

  By isotoping the open book and Gray stability we can assume that the
  restricted map $\theta_r$ has the form
  \begin{eqnarray*}
    \theta_r:~\nu(L_S) & \longrightarrow & ]-\epsilon,\epsilon[ \\
    ( z, w) & \longmapsto &  z \cdot  w
  \end{eqnarray*}
  Indeed, since the Reeb field on $\{  w^2=1 \}$ is given by
  \begin{equation*}
    R_B = w\,\partial_z \;,
  \end{equation*}
  we see that $R(\theta_r)>0$, so $\theta_r$ gives also a supporting
  open book for $(M,\xi)$.

  Next perform Legendrian surgery along $L_S$ using the ``flat'' Weinstein model as described in Section \ref{sec:flat_weinstein}.
This means that we remove a neighborhood $\nu_{M,\epsilon}(L_S)$ of $L_S$ and glue in the set $S_{1}$, which is the zero set of the function $F(z;w)=-f(w^2)+g(z^2)$.
In our setup, the Reeb field on $S_{1}$ is a positive multiple of the Hamiltonian vector field of $F$, given by
$$
X_F=\frac{\partial F}{\partial z}\partial_w -\frac{\partial F}{\partial w}\partial_z=2g'z \partial_w+2f'w \partial_z.
$$
By our choice of the functions $f$ and $g$ we see that
$$
R(z\cdot w)=NX_F(z\cdot w)=N \left( 2g' |z|^2+2f'|w|^2 \right)
>0,
$$
so the function $\theta_r$ also defines a suitable open book projection on $S_1$.

\noindent
\textbf{Step 2: Monodromy}\\
  Let us now investigate the monodromy.  The change of the monodromy
  can be localized in an $\epsilon$--neighborhood of page $0$, and
  furthermore this change of monodromy does not depend on the choice
  of $L_S$, since we have described the entire setup with the
  Weinstein model.  So we see that
  \begin{equation*}
    \widetilde{\open(\Sigma,\psi)}_{L_S}
    \cong \open(\Sigma,\psi\circ \psi_{L_S}) \;,
  \end{equation*}
  where $\psi_{L_S}$ is the change in monodromy.  Hence we only need
  to see what Legendrian surgery does to the monodromy in a single
  model situation to determine $\psi_{L_S}$.

\noindent
\textbf{Monodromy before surgery}
Let us first compute the monodromy from page $-\epsilon$ to page $\epsilon$ before the surgery.
Take a point $(z_{-\epsilon},w_{-\epsilon})$ in $A$, i.e.~$|w_{-\epsilon}|^2=1$.
Since we start at page $-\epsilon$, we have $\theta(z_{-\epsilon},w_{-\epsilon})=-\epsilon$, so we may write
$$
z_{-\epsilon}=-\epsilon w_{-\epsilon}+r_{-\epsilon},
$$
where $r_{-\epsilon}\cdot w_{-\epsilon}=0$. In particular $(w_{-\epsilon},r_{-\epsilon})$ can be seen as points in $T^*S^n$.

The Reeb flow is the reparametrized Hamiltonian flow for the Hamiltonian $w^2=1$.
Hence we have the following solutions to the flow equations:
$$
z(s)=(s-\epsilon)w_{-\epsilon}+r_{-\epsilon}, ~~w(s)=w_{-\epsilon}.
$$
We see that the Reeb flow transports the point $(z_{-\epsilon},w_{-\epsilon})$ after time-$s=2\epsilon$ to page $+\epsilon$.
Since the decomposition $z=sw+r$ is preserved, we conclude that the monodromy is trivial.

\noindent
\textbf{Monodromy after surgery}
Let us now consider the monodromy after surgery.
We need to compare with the monodromy before the surgery and for that we shall use the following map to identify subsets of $S_{-1}$ with subsets of $S_{1}$.
Take a point $(z_{-\epsilon},w_{-\epsilon})$ with $|w_{-\epsilon}|^2=1$, but $z_{-\epsilon}\neq 0$.
This point can be seen as a point in $S_{-1}$.
For $a>0$ we define the Liouville vector field
$$
X_a:=(1+a)z \partial_z -a w\partial_w.
$$
Later computations will be simpler when we take the limit $a\to \infty$.
For now we shall use the vector field $X_a$ though for some fixed $a$.

Let us now identify $S_{-1}$ with the flat parts of $S_1$, i.e.~those subsets of $S_1$ where either $f'$ or $g'$ is zero.
The time-$t$ flow of $X_a$ sends $(z_{-\epsilon},w_{-\epsilon})$ to 
$$
(e^{(1+a)t}z_{-\epsilon},e^{-at}w_{-\epsilon}).
$$
So if we send $(z_{-\epsilon},w_{-\epsilon})$ to a flat piece of $S_{1}$ we obtain
$$
(z_{-\epsilon}',w_{-\epsilon})=\left( \frac{1}{|z_{-\epsilon}|} z_{-\epsilon},
|z_{-\epsilon}|^{-\frac{a}{1+a}} w_{-\epsilon}
\right)
.
$$
Let us now consider the limit $a\to \infty$ to enable explicit computations.
We can then later argue using isotopies that our final answer remains valid.
The points on the flat piece are now
$$
(z_{-\epsilon}',w_{-\epsilon}')=\left( \frac{1}{|z_{-\epsilon}|} z_{-\epsilon},
|z_{-\epsilon}| w_{-\epsilon}
\right)
.
$$

Let us now flow with the Reeb vector field to page $\epsilon$.
As stated before, the Reeb vector field is a reparametrization of the Hamiltonian vector field
$$
X_F=2g'z \partial_w+2f'w \partial_z.
$$
On the flat piece of $S_1$ we are interested in, we have
$$
X_F=2z\partial_w.
$$
The time-$s$ flow sends the point $(z_{-\epsilon}',w_{-\epsilon}')$ to
$$
(z'(s),w'(s))=(z_{-\epsilon}',w_{-\epsilon}'+2sz_{-\epsilon}')=
\left( \frac{1}{|z_{-\epsilon}|} z_{-\epsilon},
|z_{-\epsilon}| w_{-\epsilon}+2s\frac{1}{|z_{-\epsilon}|} z_{-\epsilon}
\right)
.
$$
The page number can be computed as
$$
\theta(z'(s),w'(s))=-\epsilon+2s.
$$
Hence we see that at page $\epsilon$ we have arrived at point
$$
(z_\epsilon',w_\epsilon')=\left( \frac{1}{|z_{-\epsilon}|} z_{-\epsilon},
|z_{-\epsilon}| w_{-\epsilon}+2\epsilon\frac{1}{|z_{-\epsilon}|} z_{-\epsilon}
\right).
$$
We transform this point back to $S_{-1}$ to compare with the monodromy before surgery.
We find
$$
(z_\epsilon,w_\epsilon)=\left(
|w_\epsilon'| z_\epsilon' , \frac{1}{|w_{\epsilon}'|} w_{\epsilon}'
\right)
.
$$
Note that $|w_\epsilon'|=\sqrt{ |z_{-\epsilon}|^2+4\epsilon z_{-\epsilon}\cdot w_{-\epsilon}+4\epsilon^2}=|z_{-\epsilon}|$, so we see that the monodromy corresponding to Legendrian surgery is given by
$$
(z_\epsilon,w_\epsilon)=\left(
z_{-\epsilon} , w_{-\epsilon}+2\epsilon \frac{z_{-\epsilon}}{|z_{-\epsilon}|^2}
\right).
$$

\noindent
{\bf Recognizing the monodromy as a Dehntwist}
To interpret the monodromy as a Dehn twist, recall the contactomorphism from Section~\ref{sec:flat_weinstein}
\begin{eqnarray*}
\psi_W:\R \times T^*S^n & \longrightarrow &S_{-1}\\
(z;q,p) & \longmapsto & (zq+p,q),
\end{eqnarray*}
where we regard $T^*S^n$ as a subspace of $\R^{2n+2}$ using coordinates $(q,p)$ subject to the relations $q^2=1$ and $q\cdot p=0$.

Using this description, we decompose $(z_\epsilon,w_\epsilon)$ as
$$
z_\epsilon=\epsilon w_\epsilon+r_\epsilon,
$$
where $|w_\epsilon|^2=1$ and $w_\epsilon \cdot r_\epsilon=0$.
We have a similar decomposition for the initial point,
$$
z_{-\epsilon}=-\epsilon w_{-\epsilon}+r_{-\epsilon}.
$$ 
Let us now compute the $(w_\epsilon,r_\epsilon)\in T^*S^n$ in terms of $(w_{-\epsilon},r_{-\epsilon})$ to compare with Dehn twists.
We have 
\begin{eqnarray*}
w_\epsilon && =w_{-\epsilon}+2\epsilon \frac{-\epsilon w_{-\epsilon}+r_{-\epsilon}}{r_{-\epsilon}^2+\epsilon^2}  =\frac{r_{-\epsilon}^2-\epsilon^2}{r_{-\epsilon}^2+\epsilon^2}w_{-\epsilon}+\frac{2\epsilon}{r_{-\epsilon}^2+\epsilon^2} r_{-\epsilon}.
\end{eqnarray*}
and
\begin{eqnarray*}
r_\epsilon && =z_\epsilon-\epsilon w_\epsilon=-\epsilon w_{-\epsilon}+r_{-\epsilon}-\epsilon w_{-\epsilon}-\frac{2\epsilon^2 r_{-\epsilon}}{r_{-\epsilon}^2+\epsilon^2}+\frac{2\epsilon^3 w_{-\epsilon}}{r_{-\epsilon}^2+\epsilon^2} \\
&& =-\frac{2 \epsilon r_{-\epsilon}^2}{r_{-\epsilon}^2+\epsilon^2}w_{-\epsilon}+
\frac{r_{-\epsilon}^2-\epsilon^2}{r_{-\epsilon}^2+\epsilon^2}r_{-\epsilon}
\end{eqnarray*}

Now observe that the functions $\frac{\epsilon^2-r_{-\epsilon}^2}{r_{-\epsilon}^2+\epsilon^2}$ and $\frac{2 \epsilon |r_{-\epsilon}|}{r_{-\epsilon}^2+\epsilon^2}$ are the standard functions for the rational parametrization of the circle, i.e.~we can regard them as $\cos g(r_\epsilon)$ and $\sin g(r_\epsilon)$ of some increasing function $g(r_\epsilon)$, respectively.
With this in mind, we see that
$$
\left(
\begin{array}{c}
w_\epsilon \\
r_\epsilon
\end{array}
\right)
=
\left(
\begin{array}{cc}
-\cos g(r_\epsilon) & \sin g(r_\epsilon)/|r_{-\epsilon}| \\
-\sin g(r_\epsilon) |r_{-\epsilon}| & -\cos g(r_\epsilon)
\end{array}
\right)
\left(
\begin{array}{c}
w_{-\epsilon} \\
r_{-\epsilon}
\end{array}
\right)
.
$$
Note that we can recognize this map as a \emph{right-handed} Dehn twist $\sigma_{\tilde g(|p|)}$ if we define $\tilde g(x):=\pi-g(x)$.

\noindent
{\bf Isotopy to correct the map}
Note that we have made two approximations in the above computation.
\begin{itemize}
\item We have ignored the rounding piece of size $\delta$ in $S_1$.
\item We have used $X_\infty$ rather than $X_a$ for some $a\in \R$.
\end{itemize}
Let us argue that these approximations can be corrected for by performing isotopies with compact support on $T^*S^n$.
Indeed, note the following
\begin{itemize}
\item Fix the smoothing parameter $\delta>0$ in $S_1$.
Observe that we miss the monodromy of points $(z,w)$ such that
$$
1-\delta <|z|^2,|w|^2<1+\delta.
$$ 
Note that we can bound the effect of the flow of $X_F$ by $C\delta$ for these points.
More precisely, if we follow the above scheme, but send points in $S_{-1}$ to $S_1$ rather than just into the flat pieces of $S_{-1}$, then we have the following bounds: 
$$
|w_{\epsilon}-w_{-\epsilon}|<C\delta,~|r_{\epsilon}-r_{-\epsilon}|<C\delta.
$$
In particular, we can choose $\delta$ small enough to see that the correction in the round piece can be isotoped to the identity by an isotopy with compact support in $T^*S^n$.
\item By choosing $a$ sufficiently large, the map obtained from the above procedure with the Liouville vector field $X_a$ rather than $X_\infty$ is arbitrarily close to the one obtained with $X_\infty$.
Hence we also find an isotopy to correct for this.
\end{itemize}

 It follows that the monodromy obtained by the surgery is indeed a right-handed Dehn twist.
\end{proof}

An immediate corollary of the above theorem is the following
well-known statement about the relation between fillability and the
monodromy of an open book.

\begin{corollary}
  Let $M:=\open(\Sigma,\psi)$ be a contact open book with a Stein page
  $\Sigma$ and a monodromy that is the product of right-handed Dehn
  twists.  Then $M$ is Stein fillable.
\end{corollary}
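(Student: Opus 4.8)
The plan is to prove the corollary by induction on the number of right-handed Dehn twists in the monodromy, using Theorem~\ref{thm:Dehntwist_legendrian_surgery} to peel off one Dehn twist at a time and interpret it as Legendrian surgery, which is symplectic handle attachment and hence preserves fillability. First I would recall that if the monodromy $\psi$ is trivial, i.e.~$M = \open(\Sigma,\id)$, then $M = \partial(\Sigma\times D^2)$ is Stein fillable, since $\Sigma\times D^2$ is a product of Stein manifolds and hence Stein; this is the base case. More generally, if $\Sigma$ is a Stein manifold and $\psi = \id$, the mapping torus construction produces exactly the boundary of $\Sigma\times D^2$ with its natural Stein structure, giving a Stein filling.

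For the inductive step, write $\psi = \tau_{L_k}\circ\cdots\circ\tau_{L_1}$, a product of right-handed Dehn twists along Lagrangian spheres $L_1,\dots,L_k$ in $\Sigma$. Using the cyclic symmetry $\open(\Sigma,\psi_1\circ\psi_2)\cong\open(\Sigma,\psi_2\circ\psi_1)$ from the section on order and monodromy, I may arrange the outermost twist $\tau_{L_k}$ to be composed last, so that $\psi = \tau_{L_S}\circ\psi'$ with $\psi' = \tau_{L_{k-1}}\circ\cdots\circ\tau_{L_1}$ and $L_S := L_k$. By Lemma~\ref{lemma:lagsphere_legsphere} (in dimension $>3$; in dimension $3$ one invokes the analogous $3$-dimensional fact), after an isotopy of the contact structure and passing to a supporting open book with symplectomorphic page and isotopic monodromy, I may assume $L_S$ is Legendrian in $M$. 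Then Theorem~\ref{thm:Dehntwist_legendrian_surgery} gives
\begin{equation*}
  \open(\Sigma,\psi) = \open(\Sigma,\tau_{L_S}\circ\psi')
  \cong \widetilde{\open(\Sigma,\psi')}_{L_S}\;,
\end{equation*}
i.e.~$M$ is obtained from $\open(\Sigma,\psi')$ by Legendrian (critical) surgery along $L_S$. By the induction hypothesis, $\open(\Sigma,\psi')$ has a Stein filling $W'$; by Proposition~\ref{prop:handle_attachment}, attaching the critical symplectic handle along $L_S$ to $W'$ — which realizes exactly this Legendrian surgery on the convex boundary — produces a symplectic cobordism $\tilde W'$ that still carries an $\tilde\omega$-convex Morse function, hence is a Weinstein (equivalently Stein) filling of $M$. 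This completes the induction.

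The main obstacle I expect is bookkeeping the compatibility between the Legendrian representative $L_S$ of the abstract Lagrangian sphere and the Legendrian surgery hypothesis of Theorem~\ref{thm:Dehntwist_legendrian_surgery}: one must check that the Lagrangian sphere $L_S$ used for the last Dehn twist genuinely sits in a single page and can be made Legendrian while only altering the monodromy up to isotopy — which is precisely the content of Lemma~\ref{lemma:lagsphere_legsphere}, but one should verify that composing this isotopy with the remaining twists $\psi'$ does not destroy the inductive setup (it does not, since an isotopy of the monodromy does not change the contactomorphism type of the open book, and $\psi'$ remains a product of right-handed Dehn twists up to isotopy and conjugation). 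A secondary subtlety is the dimension-$3$ case, where Lemma~\ref{lemma:lagsphere_legsphere} does not apply verbatim; there one instead appeals to the known fact that a given curve in a $3$-manifold can always be placed on a page of some supporting open book, as remarked after that lemma. Apart from these points, the argument is a routine induction, with the substance supplied entirely by the earlier theorem and the handle-attachment proposition.
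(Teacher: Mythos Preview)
Your proof is correct and follows essentially the same approach as the paper: start with the Stein filling $\Sigma\times D^2$ of $\open(\Sigma,\id)$ and realize each right-handed Dehn twist as a Legendrian surgery via Theorem~\ref{thm:Dehntwist_legendrian_surgery}, which on the cobordism level is a critical symplectic handle attachment preserving the Stein/Weinstein property. The paper states this in compressed form without spelling out the induction, and it glosses over the point you carefully address (making each $L_S$ Legendrian via Lemma~\ref{lemma:lagsphere_legsphere}), but the substance is identical; your only slip is cosmetic---you invoke cyclic symmetry to put $\tau_{L_S}$ on the left and then apply the theorem as if it were on the right, when you could simply have written $\psi=\psi'\circ\tau_{L_S}$ from the start.
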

\begin{proof}
  The contact manifold $\tilde M:=\open(\Sigma,\id)$ is Stein-fillable
  with filling $\tilde W:= \Sigma\times D^2$.  By
  Theorem~\ref{thm:Dehntwist_legendrian_surgery}, we see that we
  obtain $M$ from $\tilde M$ by critical surgery along Legendrian
  spheres.  Since this can also be done on cobordism level, we obtain
  a Stein filling for $M$ by attaching critical handles along
  Legendrian spheres to $\tilde W$.
\end{proof}

\begin{remark}
The actual Stein filling depends on the precise factorization of the monodromy into right-handed Dehn twists.
Different factorizations of a given monodromy can give rise to distinct Stein fillings for the same contact manifold which is determined by the monodromy itself rather than its factorization into Dehn twists.
\end{remark}

\subsection{Stabilization}

Let us now consider the contact open book $M = \open(\Sigma^{2n},
\psi)$ and suppose that $L$ is an embedded Lagrangian disk $D^n$ in
the page $\Sigma$ whose boundary $\partial L$ is a Legendrian sphere
in $\partial \Sigma$.

As in the previous section we define a new contact open book by
\begin{equation*}
  \widetilde M=\open(\widetilde \Sigma,\widetilde \psi) \;,
\end{equation*}
where $\tilde \Sigma$ is obtained from $\Sigma$ by $n$--handle
attachment along $\partial L$.  The monodromy $\widetilde \psi$
restricts to the identity on the attached $n$--handle and coincides
with $\psi$ on $\Sigma$.  Note that $\tilde \Sigma$ contains a
Lagrangian sphere $L_S$ spanned by the Lagrangian disk $L$ and the
core of the $n$--handle.

\begin{remark}
\label{rem:intersection_belt_sphere}
Let us interpret the critical attachment of a handle $h$ to the page $\Sigma$ of $M$ as subcritical handle attachment to the symplectic cobordism $W:=[0,1]\times M$ as in Proposition~\ref{prop:subcritical}.
Denote the result of this handle attachment by $\tilde W$.

We see that $L_S \subset \tilde W$ intersects the belt sphere of the attached handle transversely in one point, since the part of $L_S$ in the handle has the form $core \times \{  p\}\subset h\times D^2$.
\end{remark}

\begin{definition}
  Let $M = \open(\Sigma,\psi)$ be a contact open book book with an
  embedded Lagrangian disk $L$ as above.
The contact open book
  \begin{equation*}
    \widetilde M:=\open(\widetilde \Sigma,\widetilde \psi\circ
    \tau_{L_S})
  \end{equation*}
  is called the {\bf stabilization} of $\open(\Sigma,\psi)$ along $L$.
\end{definition}

The following claim is a well-known statement due to Giroux.

\begin{proposition}[Giroux]
  The stabilization of a contact open book $\open(\Sigma, \psi)$ along
  a Lagrangian disk $L$ bounding a Legendrian sphere in $\partial
  \Sigma$ is contactomorphic to the contact manifold
  $\open(\Sigma,\psi)$.
\end{proposition}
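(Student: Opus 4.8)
The plan is to exhibit the stabilized open book $\widetilde M$ as the convex end of a symplectic cobordism built from the trivial cobordism $[0,1]\times M$ by two successive symplectic handle attachments, and then to show that these two handles cancel, so that the two convex ends, namely $M$ and $\widetilde M$, are contactomorphic.

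First I would realize the $n$--handle attachment to the page. By Proposition~\ref{prop:subcritical}, applied to the binding $B=\partial\Sigma$ and the Legendrian sphere $S_B=\partial L\subset B$ (so here $k=n$), attaching the $n$--handle to $\Sigma$ along $\partial L$ and extending $\psi$ by the identity corresponds to a subcritical contact surgery on $M$ along the isotropic $(n-1)$--sphere $S_M$ with framing $\epsilon\oplus\epsilon_{D^2}$. Following Remark~\ref{rem:intersection_belt_sphere} I would carry this out on $W:=[0,1]\times M$: attaching the corresponding symplectic handle $H$ (of index $n$) along $S_M$ in the convex end produces a cobordism $\widetilde W$ whose convex end is $\open(\widetilde\Sigma,\widetilde\psi)$ and which contains the Lagrangian sphere $L_S$ formed by $L$ together with the core of $H$. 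Next, using Lemma~\ref{lemma:lagsphere_legsphere} (for $\dim M>3$; the three--dimensional case is handled as in the remark following that lemma), I would isotope the contact structure on the convex end --- without changing the page or the monodromy up to isotopy --- so that $L_S$ becomes Legendrian in $\open(\widetilde\Sigma,\widetilde\psi)$. Then Theorem~\ref{thm:Dehntwist_legendrian_surgery} identifies $\open(\widetilde\Sigma,\widetilde\psi\circ\tau_{L_S})=\widetilde M$ with the result of Legendrian surgery on $\open(\widetilde\Sigma,\widetilde\psi)$ along $L_S$; on the cobordism level this attaches a critical handle $H'$ (of index $n+1$) along $L_S$ to $\widetilde W$, giving a cobordism $\widetilde{\widetilde W}$ whose convex end is $\widetilde M$. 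By Proposition~\ref{prop:handle_attachment} (see also Remark~\ref{rem:extending_convexity}) we may extend an $\omega$--convex function across both handle attachments, gaining exactly two critical points: $p$, of index $n$, in the middle of $H$, and $q$, of index $n+1$, in the middle of $H'$.

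It then remains to cancel $H$ and $H'$. Since $\ind q=\ind p+1$, the cancellation argument behind Lemma~\ref{lemma:handle_cancel} --- which uses only Eliashberg's Lemma~3.6b and applies verbatim to any adjacent index pair --- reduces the problem to checking that, on an intermediate level set, the stable sphere $S^-(q)$ and the unstable sphere $S^+(p)$ meet transversely in a single point. But $S^-(q)$ is the attaching sphere $L_S$ of $H'$ transported down along the Liouville flow, and $S^+(p)$ is the belt sphere of $H$; that these meet transversely in exactly one point is precisely the observation of Remark~\ref{rem:intersection_belt_sphere}, coming from the fact that inside $H$ the sphere $L_S$ has the form $\text{core}\times\{\text{pt}\}$. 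Hence the handles cancel, the completion of $\widetilde{\widetilde W}$ is symplectomorphic to the completion of $[0,1]\times M$, and comparing convex ends yields a contactomorphism $\widetilde M\cong\open(\Sigma,\psi)$.

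I expect the main difficulty to be bookkeeping rather than analysis: one must keep consistent the three incarnations of $L_S$ --- as a Lagrangian sphere in the new page, as a Legendrian sphere in the convex contact end after the Gray--stability isotopy, and as the attaching (equivalently stable) sphere of the second handle at the cobordism level --- and verify that the isotopy supplied by Lemma~\ref{lemma:lagsphere_legsphere}, being supported in a collar of the convex end, modifies $\widetilde W$ only up to symplectomorphism. One should also record explicitly that Lemma~\ref{lemma:handle_cancel}, stated there for handle indices $n-1$ and $n$ in a $2n$--dimensional cobordism, applies equally to handle indices $n$ and $n+1$ in the present $(2n+2)$--dimensional setting.
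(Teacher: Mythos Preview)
Your proposal is correct and follows essentially the same approach as the paper: interpret the page handle attachment as subcritical surgery via Proposition~\ref{prop:subcritical}, the Dehn twist as Legendrian surgery via Theorem~\ref{thm:Dehntwist_legendrian_surgery}, and then cancel the two handles using Remark~\ref{rem:intersection_belt_sphere} and Lemma~\ref{lemma:handle_cancel}. Your additional bookkeeping remarks (the dimension shift in Lemma~\ref{lemma:handle_cancel}, the $3$--dimensional caveat for Lemma~\ref{lemma:lagsphere_legsphere}, and the compatibility of the Gray isotopy with the cobordism) are sensible points that the paper glosses over.
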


\begin{proof}
  Let $M^{2n+1} = \open(\Sigma,\psi)$ be a contact open book and $L$ a
  Lagrangian disk $D^n$ in a page $\Sigma$.  To stabilize the open
  book, we first need to attach an $n$--handle to $\Sigma$ along
  $\partial L$ to obtain a new page $\widetilde \Sigma$.  The
  submanifold $\partial L$ of the binding corresponds to an isotropic
  sphere $S$, so on the level of contact manifolds, the first step of
  stabilizing is realized by performing contact surgery along the
  isotropic sphere $S$ as in Proposition~\ref{prop:subcritical}.  In
  the language of symplectic cobordisms, we start with a compact piece
  of the symplectization of $M$, say $[0,1]\times M$, and then attach
  an $n$--handle to $\{1\} \times M$ along $\{1\} \times S$.

  The next step of the stabilization consists of changing the
  monodromy; we have a Lagrangian sphere in the new page $\widetilde
  \Sigma$, which we denote by $L$.  Note that we can assume that $L$
  is also a Legendrian sphere in $\open(\widetilde \Sigma, \widetilde
  \psi)$ by Lemma~\ref{lemma:lagsphere_legsphere}.  The stabilization
  is given by
  \begin{equation*}
    M_S = \open(\tilde \Sigma,\psi \circ \tau_L) \;.
  \end{equation*}
  On the level of contact manifolds, this change of monodromy can be
  realized by performing Legendrian surgery along $L$, as we can apply
  Theorem~\ref{thm:Dehntwist_legendrian_surgery}.  In cobordism
  language, this amounts to attaching an $(n+1)$--handle to $W$ along
  the Legendrian sphere $L$, as described in Section~\ref{sec:symplectic_handle_attachment}.

By construction of the stabilization, the Legendrian sphere $L$ intersects the belt sphere of the previously attached $n$--handle in precisely one point, see Remark~\ref{rem:intersection_belt_sphere}.
This means that the interpretation of the stabilization procedure in terms of symplectic handle attachment fits exactly the description of symplectic handle cancellation in Section~\ref{sec:symplectic_handle_cancellation}.
Hence we apply the handle cancellation lemma~\ref{lemma:handle_cancel} to see that the stabilization yields a contactomorphic manifold.
\end{proof}

\bibliographystyle{amsplain}
\bibliography{../bibliography/biblio}

\end{document}